\numberwithin{equation}{section}
\theoremstyle{plain}
\newtheorem{thm}{Theorem}[section]
\newtheorem{prop}[thm]{Proposition}
\newtheorem{lem}[thm]{Lemma}
\newtheorem{cor}[thm]{Corollary}
\theoremstyle{definition}
\newtheorem{defn}[thm]{Definition}
\theoremstyle{remark}
\newtheorem{rk}[thm]{Remark}
\def\o{\overline}
\newcommand{\tr}{\operatorname{tr}}
\newcommand{\R}{\Bbb R}
\newcommand{\csum}{\mathbin{\#}}
\newcommand{\Inv}{\text{Inv}}
\DeclareFontFamily{U}{MnSymbolC}{}
\DeclareSymbolFont{MnSyC}{U}{MnSymbolC}{m}{n}
\DeclareFontShape{U}{MnSymbolC}{m}{n}{
	<-6>  MnSymbolC5
	<6-7>  MnSymbolC6
	<7-8>  MnSymbolC7
	<8-9>  MnSymbolC8
	<9-10> MnSymbolC9
	<10-12> MnSymbolC10
	<12->   MnSymbolC12}{}
\DeclareMathSymbol{\intprod}{\mathbin}{MnSyC}{'270}
\begin{document}
\title[Moduli Space of Asymptotically Flat Manifolds]{On the Moduli Space of Asymptotically Flat Manifolds with Boundary and the Constraint Equations}
\author{Sven Hirsch and Martin Lesourd}

\address{Black Hole Initiative, Harvard University, Cambridge, MA 02138}
\email{mlesourd@fas.harvard.edu}

\address{Department of Mathematics, Duke University, Durham, NC 27708-0320}
\email{sven.hirsch@duke.edu }

\maketitle

\begin{abstract}
Carlotto-Li have generalized Marques' path connectedness result for positive scalar curvature $R>0$ metrics on closed $3$-manifolds to the case of compact $3$-manifolds with $R>0$ and mean convex boundary $H>0$. Using their result, we show that the space of asymptotically flat metrics with nonnegative scalar curvature and mean convex boundary on $\mathbb{R}^3\backslash B^3$ is path connected. The argument bypasses Cerf's theorem, which was used in Marques' proof but which becomes inapplicable in the presence of a boundary. We also show path connectedness for a class of maximal initial data sets with marginally outer trapped boundary. 
\end{abstract}

\section{Introduction}
Let $\mathcal{M}_{R>0}$ be the space of positive scalar curvature metrics on $X$ endowed with the smooth topology, and $\text{Diff}(X)$ the group of diffeomorphisms acting on $X$. Marques \cite{marques2012deforming} proves the following fundamental result. 
\begin{thm}[Marques \cite{marques2012deforming}]\label{thm1}
Let $X$ be a closed 3-manifold admitting a metric of positive scalar curvature. Then $\mathcal{M}_{R>0}/ \emph{Diff}(X)$ is path connected. 
\end{thm}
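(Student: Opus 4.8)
The plan is to follow Perelman's Ricci flow with surgery, upgrading it from a purely topological statement to one about continuous paths in $\mathcal{M}_{R>0}$. Fix $g_0\in\mathcal{M}_{R>0}$ on $X$. The goal is to connect $g_0$, within $\mathcal{M}_{R>0}$ and up to the action of $\Diff(X)$, to an explicit \emph{canonical} metric built as a Gromov--Lawson connected sum of round spherical space forms and round $S^2\times S^1$ factors, one for each prime summand of $X$; and then to show that any two such canonical metrics are path connected modulo $\Diff(X)$. Given these, arbitrary $g_0,g_1\in\mathcal{M}_{R>0}$ are joined in $\mathcal{M}_{R>0}/\Diff(X)$ by concatenating a path from $g_0$ to $g_0^{\mathrm{can}}$, a path from $g_0^{\mathrm{can}}$ to $g_1^{\mathrm{can}}$, and a path from $g_1^{\mathrm{can}}$ to $g_1$.

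\textbf{Running the flow.} Since $X$ is closed with $R_{g_0}>0$, compactness gives $R_{g_0}\ge c>0$, and the evolution inequality $\frac{d}{dt}R_{\min}\ge\frac{2}{3}R_{\min}^2$ forces $R_{\min}$ to blow up in finite time, so Ricci flow with surgery started from $g_0$ terminates after finitely many surgeries with all components becoming extinct. Between consecutive surgery times the flow $g_t$ is smooth, and positivity of $R$ is preserved by the maximum principle and by Perelman's surgeries, so on each such interval $g_t$ is a continuous path in $\mathcal{M}_{R>0}$. The only obstruction to producing a genuine continuous path is thus the discontinuity at surgery times and at the instants when round components are discarded.

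\textbf{Making the surgeries continuous.} Just before a surgery the high-curvature set is a union of $\varepsilon$-necks $\approx S^2\times(-L,L)$ with $L\gg 1$ together with $\varepsilon$-caps, and surgery cuts a long neck and glues in two round caps. This is a modification localized in the neck, and a long thin rotationally symmetric PSC neck is joined, through PSC metrics, to the metric obtained by capping it off --- this is the Gromov--Lawson--Schoen--Yau connected-sum construction carried out by hand, with $R>0$ maintained along the interpolation. Realizing each surgery by such a path (after composing with a diffeomorphism identifying the pre- and post-surgery manifolds with fixed models of the resulting pieces), recording the discarded components --- which at the moment of discarding are round-like $S^3/\Gamma$'s or $S^2\times S^1$'s --- and reassembling, one connects $g_0$ in $\mathcal{M}_{R>0}/\Diff(X)$ to a canonical metric $g_0^{\mathrm{can}}$ as above. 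This is naturally organized as an induction on the number of surgeries, with the round pieces as the base case.

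\textbf{Connecting the canonical metrics.} By the Kneser--Milnor prime decomposition theorem the multiset of prime summands of $X$ is unique, so two canonical metrics for $X$ glue the same round pieces and differ only through: the order in which the iterated connected sum is formed; the balls removed and the gluing diffeomorphisms of the connecting $2$-spheres; the neck parameters; and, when $S^2\times S^1$ summands occur, the framings presenting them as self-connected sums of $S^3$. The neck parameters deform away trivially and the ordering is absorbed by isotopies sliding necks past one another, while the remaining gluing and framing choices become irrelevant once one knows they are unique up to isotopy --- which in dimension three reduces to the connectedness of $\Diff^+(S^3)$, i.e.\ \emph{Cerf's theorem} ($\pi_0\Diff^+(S^3)=0$). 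I expect this last step to be the main obstacle: it is where the topology of $\Diff^+(S^3)$ is genuinely needed, and it is exactly the ingredient that fails when $X$ has boundary, which is what forces the alternative approach of the present paper.
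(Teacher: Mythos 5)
The paper does not prove this statement at all: Theorem \ref{thm1} is quoted from Marques as an input, and the only indication given of its proof is the one-line description ``Perelman's Ricci flow with surgery, the conformal method, and Gromov--Lawson's gluing.'' Your outline --- run the flow with surgery to extinction, reverse each surgery by a Gromov--Lawson neck/cap interpolation, induct on the number of surgeries, and finally compare canonical metrics built from the prime decomposition --- is indeed the skeleton of Marques' argument, so at the level of strategy you are on the same track as the result the paper cites.

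Two points, however, are off. First, your final step leans on Cerf's theorem and you claim it is ``genuinely needed''; for this statement it is not. The theorem concerns the quotient $\mathcal{M}_{R>0}/\Diff(X)$, so uniqueness of the gluing data \emph{up to isotopy} is more than is required: a diffeomorphism of $X$ carrying one choice of balls and identifications to another already identifies the classes in the quotient, and the residual ambiguity in the gluing maps of the connecting $2$-spheres is governed by $\pi_0\Diff^+(S^2)=0$ (Smale), not by $\pi_0\Diff^+(S^3)$. Cerf's theorem enters Marques' work only in his second, asymptotically flat result on $\mathbb{R}^3$, where one does \emph{not} quotient by diffeomorphisms; and it is exactly the boundary analogue of that step --- the disconnectedness of $\Diff_\partial(A^3)$ discussed in Section \ref{section 2} --- that obstructs transplanting his argument to $\mathbb{R}^3\backslash B^3$, whereas the compact-with-boundary analogue of the present statement (Theorem \ref{thm2} of Carlotto--Li) is proved by doubling and equivariant Ricci flow with no Cerf-type input. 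So your closing claim inverts where the difficulty actually lives. Second, you gloss the main analytic ingredient: at surgery and extinction times the discarded components are only ``round-like'' (covered by $\varepsilon$-necks, $\varepsilon$-caps and nearly round pieces), and deforming such metrics to genuinely round ones, as well as proving that a long capped-off neck can be joined through $R>0$ metrics to the unsurgered one, is precisely the content of the conformal-method and Gromov--Lawson isotopy lemmas that constitute most of Marques' paper; in your sketch these are asserted rather than argued, so as written the proposal is a correct road map but not a proof.
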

Marques' beautiful proof combines Perelman's Ricci flow with surgery \cite{perelman2002entropy}, the conformal method, and Gromov-Lawson's gluing \cite{gromov1980classification}.\\ \indent 
Based on this and using a deep theorem of Cerf \cite{cerf1968diffeomorphismes} that implies the path connectedness of $\text{Diff}_+(D^3)$, the group of orientation preserving diffeomorphisms of the (closed) $3$-disc $D^3$, Marques proves that the space of asymptotically flat metrics on $\mathbb{R}^3$ with zero scalar curvature is path connected, and furthermore that the space of asymptotically flat vacuum and maximal solutions to the constraint equations of general relativity on $\mathbb{R}^3$ is path connected in a suitable topology. This improves the result of Smith-Weinstein \cite{smith2004quasiconvex}, who prove a similar result within a more restrictive class of metrics. \\ \indent
Recently, Carlotto-Li \cite{carlotto2019deformations} studied the space of positive scalar curvature metrics with mean convex boundary, $\mathcal{M}_{R>0,H>0}$, on compact 3-manifolds with boundary.\footnote{$H$ is measured pointing out of $X$.} After characterizing the topology of such manifolds, they prove the following fundamental generalization of Theorem \ref{thm1}.   
\begin{thm}[Carlotto-Li \cite{carlotto2019deformations}]\label{thm2}
Let $X$ be a compact 3-manifold with boundary that admits a metric with positive scalar curvature and mean convex boundary. Then $\mathcal{M}_{R>0,H>0}/ \emph{Diff}(X)$ is path connected.
\end{thm}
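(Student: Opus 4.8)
The plan is to adapt Marques' proof of Theorem~\ref{thm1} to the presence of a strictly mean convex boundary, keeping its three pillars --- the conformal method, Perelman's Ricci flow with surgery, and Gromov--Lawson gluing --- and adding a final step that re-introduces the boundary. Write $\mathcal{M}=\mathcal{M}_{R>0,H>0}(X)$. Since only path connectedness of $\mathcal{M}/\mathrm{Diff}(X)$ is required, it suffices to join each component of $X$, and each metric carried by it, to a fixed ``model'' metric, and then to verify that these joins are compatible with the connected-sum operation and with the mapping class group of $X$.

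\textbf{Step 1: classification and conformal normalization.} First I would record the topological classification of compact orientable $3$-manifolds admitting a metric with $R>0$ and $H>0$: capping off every boundary sphere with a ball and invoking the closed-manifold classification (Schoen--Yau/Gromov--Lawson, or Perelman) shows that each component of $X$ is a connected sum of spherical space forms $S^{3}/\Gamma$, copies of $S^{2}\times S^{1}$, and handlebodies of arbitrary genus, with $B^{3}$ the genus-zero case. Next, in place of Marques' use of the conformal method I would invoke Escobar's boundary Yamabe theory: positivity of the first eigenvalue of the conformal Laplacian paired with Escobar's conformally natural Robin boundary operator characterizes whether a conformal class meets $\mathcal{M}$, so that any $g\in\mathcal{M}$ can be joined within $\mathcal{M}$ to a conformally normalized representative and continuous families of conformal classes of positive boundary-Yamabe type can be lifted to paths in $\mathcal{M}$.

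\textbf{Step 2: a flow, or a substitute for it.} The analytic heart is a deformation lemma asserting that any $g\in\mathcal{M}$ can be joined, within $\mathcal{M}$, to a metric that in a collar $\Sigma\times[0,\varepsilon)$ of each boundary component $\Sigma\subset\partial X$ agrees with a fixed mean-convex model. Ideally one would run a Ricci flow with surgery respecting the boundary condition $H>0$; lacking such a flow, I would instead (i) use Step~1 to make the collars long and nearly cylindrical with $H$ large, (ii) glue each boundary component to the corresponding model handlebody carrying a fixed $R>0$ metric whose boundary mean curvature is $\le 0$ --- i.e., double $X$ across $\partial X$ --- so that, since $H>0$, the corner created along $\partial X$ contributes a nonnegative singular term to the distributional scalar curvature that is strictly positive on a set of positive measure, and (iii) mollify the corner by a Miao-type argument to produce a genuine $R>0$ metric $\widehat g$ on the closed filled manifold $\widehat X$, which by construction is a connected sum of spherical space forms and copies of $S^{2}\times S^{1}$ and hence falls under Theorem~\ref{thm1}.

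\textbf{Step 3: descend, glue, and the main obstacle.} Applying Theorem~\ref{thm1} on $\widehat X$ produces a path from $\widehat g$ to a canonical model; I would then re-excise the model handlebodies and close the argument with a boundary-compatible Gromov--Lawson gluing to handle the connected sums, obtaining the required path in $\mathcal{M}/\mathrm{Diff}(X)$. I expect the principal difficulty to be carrying out Steps~2--3 $\mathbb{Z}_{2}$-equivariantly for the reflection fixing $\partial X$: the Ricci flow with surgery on $\widehat X$, every surgery neck and cutoff function, the corner mollification, and the final normalization must be chosen symmetrically so that the resulting path descends to the invariant side and restricts to a path on $X$, with the handlebody summands --- whose doubles are $\#(S^{2}\times S^{1})$ rather than spherical space forms --- tracked correctly through the surgeries; and the residual gluing ambiguity, which Marques disposed of via Cerf's theorem in passing from the closed case to $\mathbb{R}^{3}$, must here be absorbed entirely into $\mathrm{Diff}(X)$.
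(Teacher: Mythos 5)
This statement is not proved in the paper at all: it is Carlotto--Li's theorem, quoted with a citation, and the paper only sketches their strategy (Gromov--Lawson doubling combined with Miao's desingularization to double $X$ across its boundary in a controlled way, followed by a sequence of \emph{equivariant} Ricci flows on the double). Your proposal follows roughly that same outline, but it has a genuine gap exactly where the real content lies. In Step 3 you propose to apply Theorem \ref{thm1} (Marques) as a black box to the closed double $\widehat X$ and then ``re-excise'' to recover a path on $X$. This cannot work as stated: the path produced by Marques' theorem comes from Ricci flow with surgery, the conformal method and Gromov--Lawson gluings that are in no way compatible with the reflection across $\partial X$, so the resulting family of metrics on $\widehat X$ does not restrict to metrics on $X$, let alone to metrics with $H>0$ on $\partial X$ (note also that an invariant metric on the double has totally geodesic, hence $H=0$, interface, so even in the equivariant setting one must separately reconnect to the strictly mean convex regime). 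What you describe in your last sentence as ``the principal difficulty'' --- carrying out the flow with surgery, the neck/cutoff choices, the corner mollification and the normalizations $\mathbb{Z}_2$-equivariantly and tracking the fixed surface and the handlebody summands through the surgeries --- is not a final verification but is essentially the entire proof; Carlotto--Li do not deduce their result from Theorem \ref{thm1}, they redo the whole Ricci-flow-with-surgery analysis equivariantly on the double. As it stands your argument reduces the theorem to an unproved equivariant analogue of Theorem \ref{thm1}.

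Two smaller points. First, your Step 1 classification is off: boundary components of $X$ need not be spheres, so ``capping off every boundary sphere'' does not produce a closed manifold; the classification (connected sums of spherical space forms, $S^2\times S^1$'s and handlebodies) is itself obtained by Carlotto--Li via the doubling construction, not by capping. Second, your description of the doubling in Step 2 is muddled: doubling glues $X$ to its mirror copy (so the corner term is $2H>0$), whereas gluing to a ``model handlebody with boundary mean curvature $\le 0$'' would not by itself guarantee a nonnegative distributional corner contribution. Neither of these is fatal, but they should be repaired if you intend this as a genuine sketch of the Carlotto--Li argument rather than a reduction to it.
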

Their delicate argument proceeds in two stages. They first combine Miao's desingularization with the doubling of Gromov-Lawson to double the manifold across its boundary in a controlled way. They then study the resulting double using a sequence of equivariant Ricci flows. Their result extends to $\mathcal{M}_{R>0,H\geq0}/ \text{Diff}(X)$ and $\mathcal{M}_{R\geq0,H\geq0}/ \text{Diff}(X)$, though the latter behaves differently when $X$ is diffeomorphic to $S^1\times S^1\times I$. \\ \indent
In analogy with Marques' second result concerning $\mathbb{R}^3$, we show that Theorem \ref{thm2} implies the following, where $B^3$ denotes the open topological 3-ball. 
\begin{thm}\label{thm3}
Let $\mathcal{M}_{R=0,H=0}^{k,p,\rho}$ be the space of asymptotically flat metrics on $\mathbb{R}^3\backslash B^3$ as defined in Definition \ref{moduli1}. Then $\mathcal{M}_{R=0,H=0}^{k,p,\rho}$ is path connected in the $W^{k,p}_{\rho}$ topology.
\end{thm}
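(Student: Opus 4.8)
The plan is to carry out the analogue of Marques' passage from the closed case to $\R^3$, with Carlotto--Li's Theorem~\ref{thm2} replacing Theorem~\ref{thm1}. One-point compactifying the asymptotically flat end of $\R^3\backslash B^3$ produces the closed $3$-ball $D^3$, equipped with a distinguished interior point $p_\infty$ (the added point) and with $\partial D^3=\partial B^3$; since $D^3$ carries a metric with $R>0$ and $H>0$ — a small geodesic ball in the round $S^3$ — Theorem~\ref{thm2} applies to it. The first step is a conformal compactification: given $g\in\mathcal M^{k,p,\rho}_{R=0,H=0}$, I choose a positive function $w=w_g$ on $\R^3\backslash B^3$ with $\Delta_g w\le 0$, $\partial_\nu w=0$ on $\partial B^3$, and $w=|x|^{-1}+O(|x|^{-2})$ at infinity — solvable via an exterior Neumann problem, with $w_g>0$ by the maximum principle and depending continuously on $g$ in the weighted topology (for $g=\delta$ one may take $w_\delta=r^{-1}-\tfrac12 r^{-2}$). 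In the inversion chart near $p_\infty$ the factor $w_g^4$ cancels the $|y|^{-4}$ singularity of $g$, so $\hat g:=w_g^4\,g$ is a smooth metric on $D^3$, and the three-dimensional conformal change laws together with $R_g=0$, $H_g=0$ give $R_{\hat g}=-8\,w_g^{-5}\Delta_g w_g\ge 0$ and $H_{\hat g}=4\,w_g^{-3}\partial_\nu w_g=0$, so $\hat g\in\mathcal M_{R\ge0,H\ge0}(D^3)$. The construction is reversible: from a smooth $h$ on $D^3$ with $R_h\ge0$, $H_h=0$ and the marked point $p_\infty$, solving $8\Delta_h v-R_h v=0$ on $D^3\backslash\{p_\infty\}$ with Neumann data on $\partial D^3$ and a Green's-function singularity at $p_\infty$ yields $v>0$ for which $v^4 h$ is asymptotically flat with $R=0$, $H=0$; and since $1/w_g$ is precisely the conformal-Laplacian-harmonic function for $\hat g$ with the right normalization, the two operations are mutually inverse.

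With this dictionary, given $g_0,g_1\in\mathcal M^{k,p,\rho}_{R=0,H=0}$ I form $\hat g_0,\hat g_1\in\mathcal M_{R\ge0,H\ge0}(D^3)$ and apply the $R\ge0$, $H\ge0$ version of Theorem~\ref{thm2} (valid since $D^3\not\cong S^1\times S^1\times I$), or rather its proof, which produces a continuous path $\hat g_t$, $t\in[0,1]$, in $\mathcal M_{R\ge0,H\ge0}(D^3)$ running from $\hat g_0$ to $\phi_0^*\hat g_1$ for some $\phi_0\in\Diff(D^3)$. Since $D^3$ is connected and $\mathrm{GL}^+(3,\R)$ is connected, $\phi_0$ can be isotoped — first so as to fix $p_\infty$, then so as to become the identity on a neighborhood of $p_\infty$ — to a diffeomorphism $\phi$ equal to the identity near $p_\infty$; appending the corresponding pullback path $s\mapsto(\psi_s\circ\phi_0)^*\hat g_1$ (which stays in $\mathcal M_{R\ge0,H\ge0}(D^3)$, being pullbacks of $\hat g_1$) extends $\hat g_t$ to a path from $\hat g_0$ to $\phi^*\hat g_1$. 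Un-compactifying this whole path with the construction above gives a continuous path in $\mathcal M^{k,p,\rho}_{R=0,H=0}$ from $g_0$ to $\phi^*g_1$, because the Green's-function construction commutes with $\phi$ (which fixes a neighborhood of $p_\infty$ and preserves $\partial D^3$) and because un-compactifying $\hat g_0,\hat g_1$ returns $g_0,g_1$. Everything thus reduces to connecting $\phi^*g_1$ to $g_1$ inside $\mathcal M^{k,p,\rho}_{R=0,H=0}$.

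This last point is the crux, and it is exactly here that the argument must depart from Marques': his use of Cerf's theorem on $\Diff(S^3)$ — equivalently on $\Diff_+(D^3)$ — no longer applies once the boundary $\partial B^3$ is present, and a boundary-sensitive substitute is needed. Regarded as a diffeomorphism of $\R^3\backslash B^3$ that is the identity near infinity, $\phi$ represents a class in $\pi_0\Diff(\R^3\backslash B^3;\,\mathrm{id}\text{ near }\infty)\cong\pi_0\Diff(D^3;\,\mathrm{id}\text{ near }p_\infty)$, which after contracting a ball around $p_\infty$ becomes $\pi_0\Diff(S^2\times I;\,S^2\times\{0\})$ — diffeomorphisms of the collar $S^2\times I$ fixing the inner sphere pointwise, with the outer sphere $\partial D^3$ left \emph{free}. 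The orientation-preserving component of this group is connected: using the fibration $\Diff(S^2\times I;\,S^2\times\{0\})\to\Diff(S^2)$ given by restriction to the free end, the triviality of $\pi_0\Diff_+(S^2)$ (Smale), and the surjectivity of the connecting homomorphism $\pi_1\Diff_+(S^2)\to\pi_0\Diff(S^2\times I,\partial)$ between two copies of $\mathbb Z/2$ — the generating loop of rotations of $S^2$ being carried to the nontrivial sphere twist — one obtains $\pi_0\Diff_+(S^2\times I;\,S^2\times\{0\})=0$; this is the topological input that takes over the role of Cerf's theorem. Since $\phi$ is orientation-preserving (it is the identity near $p_\infty$), it is therefore isotopic, through diffeomorphisms equal to the identity near $p_\infty$ — equivalently near infinity — to the identity; pulling $g_1$ back along this isotopy yields a path in $\mathcal M^{k,p,\rho}_{R=0,H=0}$ from $\phi^*g_1$ to $g_1$, and concatenating with the paths above connects $g_0$ to $g_1$. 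Apart from this step, the remaining work — weighted elliptic theory for the Green's functions, their continuous dependence on $g$, and the exact matching of endpoints under the conformal (un)compactification — is routine in spirit, though technical.
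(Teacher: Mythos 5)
Your overall scheme (conformally compactify, run Carlotto--Li on the compact model, blow back up by a Green's function, and then dispose of the residual diffeomorphism) is parallel to the paper's, but there is a genuine gap at the very first step. The claim that $\hat g=w_g^4\,g$ ``is a smooth metric on $D^3$'' is false for a general $g$ with $g-g_E\in W^{k,p}_\rho$, $-1<\rho<0$: cancelling only the leading $|y|^{-4}$ singularity in the inversion chart leaves an error of order $|y|^{-\rho}$, which is H\"older but not smooth at $p_\infty$ (and not of the regularity needed to feed into Theorem \ref{thm2}, which concerns smooth metrics on compact manifolds with boundary). Your own model computation already shows the problem: with $w_\delta=r^{-1}-\tfrac12 r^{-2}$ one gets $\hat g=(1-|y|/2)^4g_E$ near $p_\infty$, which is merely Lipschitz there since $|y|$ is not smooth. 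This is exactly why the paper's proof begins with Lemma \ref{deformation}: using Maxwell's isomorphism (Proposition \ref{maxwell}) it produces a path \emph{inside} $\mathcal{M}^{k,p,\rho}_{R=0,H=0}$ to a metric that is smooth and conformally flat outside a compact set, and only then is the compactification of Lemma \ref{compactification} smooth across $p$. Relatedly, the step you call ``routine in spirit'' --- un-compactifying the entire path into a continuous family of asymptotically flat metrics in a fixed $W^{k,p}_\rho$ chart with the correct endpoints --- is where the paper's Lemma \ref{interpolation} does real work: the natural identification $\exp_{\o g_\mu,p}\circ\,\mathrm{Inv}$ moves with $\mu$, and the paper constructs the interpolating diffeomorphisms by hand (the anisotropic rescalings $f_\mu$ adapted to $\o g_\mu(p)$) precisely to keep the blown-up metrics asymptotically flat and continuous. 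Without the deformation step and with the inversion step asserted rather than carried out, the proposal does not yet prove the theorem.

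Your handling of the diffeomorphism ambiguity, on the other hand, is genuinely different from the paper's and looks correct. The paper shows (Corollary \ref{fibration}) that boundary-fixing isotopies cannot work, since $\Diff_\partial(A^3)$ has two components generated by the sphere twist, and then avoids the issue altogether by the explicit construction in Lemma \ref{interpolation}. You instead note that pulling back by diffeomorphisms which only preserve $\partial B^3$ setwise still preserves $R=0$, $H=0$, and that the group of diffeomorphisms of $S^2\times I$ fixing just the inner sphere is connected, because the rotation loop in $\pi_1\Diff_+(S^2)\cong\mathbb{Z}/2\mathbb{Z}$ hits the twist class under the connecting map of the restriction fibration. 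This does close Marques' original scheme for $\R^3\backslash B^3$, but note the cost: it needs Smale's theorem and Hatcher's Theorem \ref{hatcher} (to know $\pi_0\Diff_\partial(S^2\times I)$ is no larger than $\mathbb{Z}/2\mathbb{Z}$, so that hitting the twist suffices), i.e.\ exactly the kind of input on homotopy types of diffeomorphism groups that the paper's constructive interpolation was designed to bypass.
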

\begin{rk}
Thus given an asymptotically flat manifold with $R=0,H=0$, it is possible to continuously deform it to the Riemannian Schwarzschild manifold $(M_{S},g_{S})$, i.e., $M_{S}=\R^3\backslash B_1(0)$ with $g_{S}=(1+\frac 1r)^4g_E$, whilst maintaining $R=0,H=0$. Explicit paths of this kind to $g_S$ have been exploited to obtain geometric inequalities as in Bray's proof of the Riemannian Penrose inequality \cite{bray2001proof}.
\end{rk}
To prove Theorem \ref{thm3}, we construct a path between two arbitrary metrics in $\mathcal{M}_{R=0,H=0}^{k,p,\rho}$, which we denote by $g_{-1}$ and $g_2$. We apply Lemma \ref{deformation} to $g_{-1}$ and $g_2$ separately, to obtain a pair of smooth metrics $g_0$ and $g_1$ satisfying a list of desirable properties. The rest of the argument lies in connecting $g_0$ and $g_1$. This involves using Lemma \ref{compactification} to endow $S^3\backslash B^3$ (the compact model of $\mathbb{R}^3\backslash B^3$) with two metrics $\overline{g}_0,\overline{g}_1$ in such a way that $\overline{g}_0,\overline{g}_1$ can be joined by a continuous path of Yamabe positive metrics by \cite{escobar1992conformal} and Theorem \ref{thm2}. To finish the proof we need to invert the path on $S^3\backslash B^3$ to a path in $\mathcal{M}_{R=0,H=0}^{k,p,\rho}$ on $\mathbb{R}^3\backslash B^3$. This is done in Lemma \ref{interpolation}, which explicitly constructs the relevant diffeomorphisms.\\ \indent 

Carlotto-Li point out that their result implies a statement like Theorem \ref{thm3} by an argument along the lines of \cite{marques2012deforming}. Although this is true, Marques' argument alone does not strictly speaking yield the desired result for $\mathbb{R}^3\backslash B^3$. Towards the end of his proof, Marques employs the fact that $\text{Diff}_+(D^3)$ is path connected, which follows from \cite{cerf1968diffeomorphismes}. Running the same argument for $\mathbb{R}^3\backslash B^3$ eventually leads to one consider a possible path between two elements of $\text{Diff}_\partial (A^3)$, the group of boundary fixing diffeomorphisms of the closed $3$-annulus $A^3\simeq D^3_{r>r'}\backslash B^3_{r'}$. But as we show in Section \ref{section 2}, $\text{Diff}_\partial(A^3)$ is not path-connected and so the argument does not close. We get around this issue by finding a more constructive proof that does not invoke the homotopy type of diffeomorphism groups. 
\begin{rk}\label{remark1}
One wonders whether the arguments of Theorem \ref{thm3} apply to manifolds other than $\mathbb{R}^3\backslash B^3$, say $(\mathbb{R}^3\backslash B^3)\csum X'$ for $X'$ closed but not diffeomorphic to $S^3$, or say with multiple boundary components $\mathbb{R}^3\backslash \bigcup_{i=1} B^3_i$. Without modification however, the methods herein yield at best a result modulo quotients by the relevant diffeomorphism groups.
\end{rk}
\begin{rk}\label{remark3}
We note that if the initial asymptotically flat metric is smooth with $R=0,H=0$, then the path constructed in the proof of Theorem \ref{thm3} produces a continuous path of smooth metrics.
\end{rk}
Theorem \ref{thm3} gives the following.
\begin{cor}\label{cor}
Let $\mathcal{M}_{R\geq 0,H\geq0}^{k,p,\rho}$ be the space of asymptotically flat metrics on $\mathbb{R}^3\backslash B^3$ as defined in Definition \ref{moduli1}. Then $\mathcal{M}_{R\geq 0, H\geq 0}$ is path connected in the $W^{k,p}_\rho$ topology.
\end{cor}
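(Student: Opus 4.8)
The plan is to reduce Corollary \ref{cor} to Theorem \ref{thm3} by a single canonical conformal deformation. Write $M = \mathbb{R}^3\backslash B^3$. Given $g \in \mathcal{M}_{R\geq 0,H\geq 0}^{k,p,\rho}$, I would first construct a conformal factor $u>0$ with $u-1 \in W^{k,p}_\rho$ such that $u^4 g \in \mathcal{M}_{R=0,H=0}^{k,p,\rho}$, then exhibit an explicit continuous path from $g$ to $u^4g$ which stays in $\mathcal{M}_{R\geq 0,H\geq 0}^{k,p,\rho}$, and finally invoke Theorem \ref{thm3} to connect the $R=0,H=0$ endpoints.

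For the conformal factor, recall that in dimension three $\tilde g = u^4 g$ satisfies $R_{\tilde g} = u^{-5}(-8\Delta_g u + R_g u)$ and $H_{\tilde g} = u^{-3}(4\partial_\nu u + H_g u)$, where $\nu$ is the outward unit normal of $\partial M$ and $H$ is measured with respect to $\nu$. I would therefore solve the Robin-type boundary value problem
\[
-8\Delta_g u + R_g u = 0 \ \text{ in } M, \qquad 4\partial_\nu u + H_g u = 0 \ \text{ on } \partial M, \qquad u - 1 \in W^{k,p}_\rho .
\]
Testing the equation against $u$ and using the boundary condition turns the associated quadratic form into $\int_M (8|\nabla u|^2 + R_g u^2) + 2\int_{\partial M} H_g u^2$, which is nonnegative precisely because $R_g \geq 0$ and $H_g \geq 0$; since on asymptotically flat manifolds this elliptic operator is Fredholm of index zero on the relevant weighted spaces, one obtains a unique solution, and elliptic regularity places $u-1$ in $W^{k,p}_\rho$. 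The strong maximum principle in the interior, the Hopf lemma on $\partial M$ (again using $H_g \geq 0$ at a boundary minimum), and $u \to 1$ at infinity force $u>0$; since $u-1$ and hence $u^4 - 1$ lie in the Banach algebra $W^{k,p}_\rho$, we get $u^4 g - g_E \in W^{k,p}_\rho$, so by construction $u^4 g \in \mathcal{M}_{R=0,H=0}^{k,p,\rho}$.

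For the path I would use the straight-line interpolation $w_t := 1 + t(u-1)$, $t \in [0,1]$, which stays positive since $u>0$, and set $g_t := w_t^4 g$, so $g_0 = g$ and $g_1 = u^4 g$. Substituting $\Delta_g u = \tfrac18 R_g u$ and $\partial_\nu u = -\tfrac14 H_g u$ from the previous step, a one-line computation gives
\[
R_{g_t} = (1-t)\,w_t^{-5} R_g \geq 0, \qquad H_{g_t} = (1-t)\,w_t^{-3} H_g \geq 0,
\]
so the whole path lies in $\mathcal{M}_{R\geq 0,H\geq 0}^{k,p,\rho}$, and $t \mapsto g_t$ is continuous in the $W^{k,p}_\rho$ topology because $w_t - 1 = t(u-1)$ is affine in $t$ with values in $W^{k,p}_\rho$ and $w_t$ is uniformly bounded away from $0$.

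To conclude, given $g, g' \in \mathcal{M}_{R\geq 0,H\geq 0}^{k,p,\rho}$ with conformal factors $u, u'$ as above, the construction joins $g$ to $u^4 g$ and $g'$ to $(u')^4 g'$ inside $\mathcal{M}_{R\geq 0,H\geq 0}^{k,p,\rho}$, while Theorem \ref{thm3} joins $u^4 g$ to $(u')^4 g'$ inside $\mathcal{M}_{R=0,H=0}^{k,p,\rho} \subseteq \mathcal{M}_{R\geq 0,H\geq 0}^{k,p,\rho}$; concatenating these three paths proves the corollary. I expect the only genuinely technical point to be the conformal factor step — solvability, positivity, and regularity of the Robin boundary value problem on an asymptotically flat manifold with boundary — where the hypotheses $R_g \geq 0$ and $H_g \geq 0$ enter essentially, both for the coercivity that yields existence and for the maximum principle that yields positivity. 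As in Remark \ref{remark3}, if $g$ is smooth this construction produces a smooth path.
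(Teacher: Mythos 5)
Your proposal is correct and follows essentially the same route as the paper: solve the linear problem $\Delta u-\tfrac18 R_g u=0$, $\partial_\nu u+\tfrac14 H_g u=0$, $u\to1$ (the paper does this by citing Proposition \ref{maxwell} directly, whose proof is the coercivity/Fredholm argument you sketch), conformally deform to an $R=0$, $H=0$ metric, and then invoke Theorem \ref{thm3}. Your explicit interpolation $w_t=1+t(u-1)$ with $R_{g_t}=(1-t)w_t^{-5}R_g$ and $H_{g_t}=(1-t)w_t^{-3}H_g$, and the positivity argument for $u$, simply make precise the steps the paper leaves implicit in the phrase ``we can conformally deform both the scalar curvature and mean curvature to zero.''
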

As in \cite{marques2012deforming}, we also consider the constraint equations of general relativity, which are
\begin{equation}\label{eq:1}
16\pi\mu=R_{g}+(\text{Tr}_g\sigma)^2-|\sigma|_g^2,
\end{equation}
\begin{equation}\label{eq:2}
8\pi J=\text{div}_g\left(\sigma-(\text{Tr}_g\sigma) g\right)
\end{equation}
where $g$ is a metric and $\sigma$ a symmetric two-form on the manifold. We study the set of \textit{asymptotically flat} pairs $(g,\sigma)$ on $\mathbb{R}^3\backslash B^3$ solving \eqref{eq:1}, \eqref{eq:2} with $\mu=0, J=0$ (vaccum) and $\text{Tr}_g\sigma=0$ (maximal) along with the boundary condition
\begin{equation}\label{eq:3}
   \theta^+= -H+\text{Tr}_g(\sigma)-\sigma(\nu,\nu)=0,\quad  H\geq 0
\end{equation}
where $\nu$ is the normal to $\partial B^3$ pointing away from the asymptotically flat end. In general relativistic terms, we say that $\partial M$ is marginally outer trapped when $\theta^+=0$.
\begin{cor}\label{cor2}
Let $\mathcal{M}_{BH}$ be the space of pairs $(g,\sigma)$ on $\mathbb{R}^3\backslash B^3$ defined in Definition \ref{moduli1}, which satisfy \eqref{eq:1}, \eqref{eq:2}, \eqref{eq:3} with $\mu=0$, $J=0$, $\text{Tr}_g\sigma=0$. Then $\mathcal{M}_{BH}$ is path connected in the $W^{k,p}_\rho\oplus W^{k-1,p}_{\rho-1}$ topology.
\end{cor}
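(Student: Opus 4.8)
The plan is to adapt Marques' treatment of the vacuum maximal constraint equations \cite{marques2012deforming} to the boundary setting: use the conformal method to deform the symmetric two-tensor $\sigma$ to zero while correcting the metric so that \eqref{eq:1}, \eqref{eq:2} and the trapping condition \eqref{eq:3} (with $H\geq 0$) hold all along the way, and then invoke Theorem~\ref{thm3}. Fix $(g,\sigma)\in\mathcal{M}_{BH}$. Since $\mu=J=0$ and $\text{Tr}_g\sigma=0$, the constraints reduce to $R_g=|\sigma|_g^2$ and $\text{div}_g\sigma=0$, while \eqref{eq:3} gives $\sigma(\nu,\nu)=-H_g$. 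For $t\in[0,1]$ I would use $(1-t)\sigma$ as a one-parameter family of transverse-traceless seed tensors (each still trace-free and divergence-free for $g$) and seek a positive function $u_t$ on $\mathbb{R}^3\backslash B^3$ with $u_t-1\in W^{k,p}_\rho$ solving the Lichnerowicz problem
\begin{equation*}
-8\Delta_g u_t + R_g u_t = (1-t)^2\,|\sigma|_g^2\,u_t^{-7}\ \text{ in }\ \mathbb{R}^3\backslash B^3,\qquad 4\,\partial_\nu u_t + H_g\,u_t + (1-t)\,\sigma(\nu,\nu)\,u_t^{-3} = 0\ \text{ on }\ \partial B^3,
\end{equation*}
and then set $g_t:=u_t^4 g$, $\sigma_t:=(1-t)\,u_t^{-2}\sigma$. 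Using $R_{g_t}=u_t^{-5}(-8\Delta_g u_t+R_g u_t)$, $|\sigma_t|_{g_t}^2=u_t^{-12}|\sigma|_g^2$, $\text{Tr}_{g_t}\sigma_t=u_t^{-6}\,\text{Tr}_g\sigma$, the conformal law $H_{g_t}=u_t^{-2}H_g+4u_t^{-3}\partial_\nu u_t$ and $\nu_t=u_t^{-2}\nu$, one checks directly that $(g_t,\sigma_t)$ solves \eqref{eq:1}, \eqref{eq:2} with $\mu=J=0$, that $\text{Tr}_{g_t}\sigma_t=0$, and that $\theta^+_{(g_t,\sigma_t)}=-H_{g_t}-\sigma_t(\nu_t,\nu_t)=0$ with $H_{g_t}=(1-t)\,u_t^{-6}H_g\geq 0$; hence $(g_t,\sigma_t)\in\mathcal{M}_{BH}$ for every $t$. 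At $t=0$ the unique solution is $u_0\equiv 1$ (precisely because $R_g=|\sigma|_g^2$ and $\sigma(\nu,\nu)=-H_g$), so $(g_0,\sigma_0)=(g,\sigma)$; at $t=1$ one has $\sigma_1\equiv 0$, $R_{g_1}=0$ and $H_{g_1}=0$, so $(g_1,0)\in\mathcal{M}_{R=0,H=0}^{k,p,\rho}\times\{0\}\subset\mathcal{M}_{BH}$.

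The analytic ingredient is the existence theory for the Lichnerowicz equation with apparent-horizon (marginally trapped) boundary data, carried out in the weighted Sobolev class of Definition~\ref{moduli1}. Because $\text{Tr}_g\sigma=0$, the interior nonlinearity $u\mapsto -(1-t)^2|\sigma|_g^2 u^{-7}$ is monotone, so the boundary value problem is accessible by the method of sub- and super-solutions; the Robin term carries exactly the sign for which this problem is solvable, since at $t=0$ it is the original condition \eqref{eq:3} obeyed by $(g,\sigma)$ and for $t\in(0,1]$ it only shrinks. Constant barriers should provide a priori bounds $0<c\le u_t\le C$ with $c,C$ independent of $t$; together with uniqueness and the standard weighted elliptic estimates this upgrades to continuous dependence of $u_t$ on $t$ in $W^{k,p}_\rho$ (and in $C^\infty_{\mathrm{loc}}$ when $g$ is smooth, compare Remark~\ref{remark3}). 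Hence $t\mapsto(g_t,\sigma_t)=(u_t^4 g,\,(1-t)u_t^{-2}\sigma)$ is a continuous path in the $W^{k,p}_\rho\oplus W^{k-1,p}_{\rho-1}$ topology joining $(g,\sigma)$ to $(g_1,0)$, with asymptotic flatness and the prescribed decay rates preserved because $u_t\to 1$ at infinity.

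Finally, the slice $\mathcal{M}_{BH}\cap\{\sigma=0\}$ is naturally identified with $\mathcal{M}_{R=0,H=0}^{k,p,\rho}$ — for $\sigma=0$ the constraints force $R_g=0$ and \eqref{eq:3} forces $H_g=0$ — and this slice is path connected by Theorem~\ref{thm3}; so $(g_1,0)$ can be joined inside $\mathcal{M}_{BH}$ to the fixed base point $(g_S,0)$, where $g_S$ is the Schwarzschild metric. Concatenating the two paths shows that every element of $\mathcal{M}_{BH}$ connects to $(g_S,0)$, whence $\mathcal{M}_{BH}$ is path connected; by Remark~\ref{remark3} together with elliptic regularity for $u_t$, the path consists of smooth data whenever the starting pair is smooth.

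The step I expect to be the main obstacle is the first: solving the Lichnerowicz equation on $\mathbb{R}^3\backslash B^3$ with the mean-convex/marginally-trapped boundary condition in the low-regularity class of Definition~\ref{moduli1}, and — more importantly for the homotopy — controlling the solution uniformly in the deformation parameter $t$ so that $t\mapsto(g_t,\sigma_t)$ is a genuinely continuous family and not just a pointwise assignment; keeping track of the correct sign in the nonlinear boundary term as $t$ varies is the delicate point. Everything else (the conformal bookkeeping for the constraints and for $\theta^+$, and the identification of the $\sigma=0$ slice with the setting of Theorem~\ref{thm3}) is routine.
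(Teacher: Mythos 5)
Your proposal takes essentially the same route as the paper: deform $\sigma\to(1-t)\sigma$ and correct conformally by solving the Lichnerowicz equation with the marginally-trapped Robin boundary condition so that the vacuum maximal constraints and $\theta^+=0$, $H\ge 0$ hold along the way, then reduce the $\sigma=0$ endpoint to the Riemannian result; the paper packages the same mechanism as path connectedness of the enlarged set $\tilde{\mathcal M}_{BH}$ (with $R\ge|\sigma|^2$, $H\ge-\sigma(\nu,\nu)\ge0$) via Corollary \ref{cor}, followed by a conformal retraction onto $\mathcal{M}_{BH}$. One caveat on the analytic step you flagged: constant barriers do not suffice — $u\equiv 1$ is a supersolution (this is exactly where the inequalities $R\ge(1-t)^2|\sigma|^2$ and $H\ge-(1-t)\sigma(\nu,\nu)\ge0$ enter), but a constant $c<1$ is generally not a subsolution where $\sigma$ vanishes and $R>0$ and is incompatible with $u\to1$; the paper instead takes $\underline u=1+\psi$ with $\psi$ solving a linear problem via Maxwell's isomorphism (Proposition \ref{maxwell}), and the same isomorphism gives uniqueness, which is what forces $u_0\equiv1$ at $t=0$ and underlies the continuity in $t$.
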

Corollary \ref{cor2} is relevant to general relativity because $\mathcal{M}_{BH}$ represents a certain class of initial data sets expected to give rise to a black hole spacetimes. The path connectedness of $\mathcal{M}_{BH}$ can be thought of as a necessary condition for the so-called Final State Conjecture, which states that generic black hole initial data sets asymptote to ones that isometrically embed into the Kerr solution. The conjecture may be studied for initial data sets belonging to $\mathcal{M}_{BH}$. Assuming that the subset $K\subsetneq \mathcal{M}_{BH}$ of pairs $(g,\sigma)$ that isometrically embed into the Kerr spacetime lie in at most one component of $\mathcal{M}_{BH}$, the disconnectedness of $\mathcal{M}_{BH}$ would imply that certain initial data sets describing black holes are unable to approach $K$ in a continuous way, which would spell trouble for the Final State Conjecture. Corollary \ref{cor2} shows that no such tension arises.\\

\textbf{Acknowledgements.}  S.H. and M.L. wish to thank Aghil Alaee, Hubert Bray, Jordan Keller, Sander Kupers, Alex Mramor, Robert Wald for useful discussions, and, in particular, Shing-Tung Yau for his interest and continuous support. M.L. would like to acknowledge the Gordon and Betty Moore and the John Templeton foundations for their support of the Black Hole Initiative. Finally, we would like to gratefully acknowledge the detailed report of the referee, which vastly improved the readability of the paper and led to a simplified argument.

\section{Definitions and Conventions}

$B^n$ is the topological open $n$-ball (not a metric ball), $D^n$ is the closed topological $n$-disc, and $A^n=D^n_{r>r'} \backslash B^n_{r'}$ is the closed $3$-annulus. \\ \\ \indent 
If $X$ is a manifold, $\text{Diff}_+(X)$ is the group of orientation preserving diffeomorphisms of $X$, and if $X$ has a boundary, $\text{Diff}_\partial (X)$ is the group of boundary fixing diffeomorphisms on $X$. \\ \\ \indent
Denote by $g_E$ the flat Euclidean metric on $\mathbb{R}^3$. For $x\in \mathbb{R}^3$, let $w(x)=(1+|x|^2)^{1/2}$. Then for any $\rho\in \mathbb{R}$, any $1<p <\infty$ and any open set $\Omega\subset \mathbb{R}^3$, the weighted Sobolev space  $W^{k,p}_{\rho}(\Omega)$ is the subset of $W^{k,p}_{\emph{loc}}(\Omega)$ for which the following norm is finite.
\begin{align}
||u||_{W^{k,p}_{\rho,g_E}(\Omega)} =\sum_{|\beta|\leq k}||w^{-\rho-\frac{n}{p}+|\beta|}\partial^\beta u ||_{L^p(\Omega)} 
\end{align}
Weighted spaces of continuous functions are defined by the norm 
\begin{align}
||u||_{C^k_{\rho,g_E}(\Omega)}= \sum_{|\alpha|\leq k} \sup_{x\in \Omega} w(x)^{-\rho+|\alpha|} |\partial^{\alpha}u(x) | 
\end{align}
and see \cite[Lemma 2.1]{maxwell2005solutions} for the standard embedding theorems in this context.\\ \indent 
Let $M=\R^3\backslash B^3$ be Euclidean space minus the unit ball.
Besides the Euclidean metric $g_E$, we suppose that $M$ is equipped with another Riemannian, metric $g\in W^{k,p}_{\emph{loc}}(M)$ such that $(M,g)$ is complete. 
Here $1/p -k/n <0$ so that $g$ is continuous.
We call $(M,g)$ asymptotically flat of order $\rho<0$ if $g-g_E\in W^{k,p}_\rho(M)$.\\ \indent
We denote by $W^{k,p}_{\rho}(M)=W^{k,p}_{\rho, g}(M)$ the corresponding weighted function spaces associated to the metric $g$. 
Analogously, we define the weighted function space $L^p_\rho(M)$ and $C^k_\rho(M)$, and $C^\infty_\rho(M)=\cap^\infty_{k=0} C^k_\rho(M)$. These weighted spaces are also serve to define an asymptotically flat initial data set for the constraint equations. The only additional requirement is that $\sigma$, the second fundamental form associated with the initial data set, behaves like a first derivative of $g\in W^{k,p}_{\rho}$ and thus belongs to $W^{k-1,p}_{\rho-1}(M)$. \\ \indent
A metric $g$ on an asymptotically flat manifold $M$ with boundary $\partial M$ is \textit{conformally flat} outside a compact set if there is a compact set containing $\partial M$ outside of which the metric takes the form $u^4 g_E$, where $g_E$ is the flat metric metric. \\ \indent 
Let us now define the main spaces of interest. 
\begin{defn}\label{moduli1}
Let $k$ be an integer $\geq 2$, let $-1<\rho<0$, and $p>3/k$. Then 
\begin{enumerate}
    \item[(i)] $\mathcal{M}_{R=0,H=0}^{k,p,\rho}$ denotes the set of metrics $g\in W^{k,p}_{\rho}$ on $M$ satisfying $R_g=0$, $H_{g}= 0$, 
    \item[(ii)] $\mathcal{M}_{R\geq0,H\geq0}^{k,p,\rho}$ is defined in the same way except that we enlarge the curvature restriction on $g$ from $H_g=0$ to $H_g\geq0$ and from $R_g=0$ to $R_g\geq 0$, 
    \item[(iii)] $\mathcal{M}_{BH}$ is defined as the set of pairs $(g\in W^{k,p}_\rho,\sigma\in W^{k-1,p}_{\rho-1})$ on $M$ that solve \eqref{eq:1}, \eqref{eq:2}, \eqref{eq:3} with $\mu=0,J=0$, $\text{Tr}_g\sigma=0$.
\end{enumerate}
\end{defn}

\section{Marques' Argument with a Boundary}\label{section 2}
After proving Theorem \ref{thm1} Marques turns to the case of asymptotically flat metrics on $\mathbb{R}^3$ and proves the path connectedness of three spaces, $\mathcal{M}_{R=0}$, $\mathcal{M}_{R\geq0}$, and $\mathcal{M}_{V}$ where $\mathcal{M}_{V}$ denotes the class of asymptotically flat pairs $(g,k)$ solving the vacuum, maximal constraint equations; Marques uses weighted H\"older spaces rather than Sobolev, but this distinction is immaterial here. The hardest part of the argument concerns $\mathcal{M}_{R=0}$ and proceeds in four stages. 
\begin{itemize}
\item[(1)] Based on work of Smith-Weinstein \cite{smith2004quasiconvex}, Marques constructs a continuous path from any metric in $\mathcal{M}_{R=0}$ to a smooth, harmonically flat metric $g\in \mathcal{M}_{R=0}$.
\item[(2)] With $g$ in hand, Marques shows the existence of a diffeomorphism $\phi:\mathbb{R}^3\to S^3\backslash \{p\}$ such that $\phi_*(g)=G^4\bar{g}$ where $\bar{g}$ is a metric on $S^3$ with positive Yamabe type and $G$ is the Green's function for the conformal Laplacian $\mathcal{L}_{\bar{g}}$, i.e., a function on $S^3\backslash \{p\}$ solving the distributional equation $\mathcal{L}_{\bar{g}}(G)=-4\pi \delta_p$.
\item[(3)] Using (2) and a theorem of Palais \cite{palais1959natural} on extensions of local diffeomorphisms, Marques constructs a continuous family of diffeomorphisms $\phi_\mu:\mathbb{R}^3\to S^3\backslash \{p\}$ with properties matching those in (2).
\item[(4)] The path connectedness of $\mathcal{M}_{R=0}$ results from combining (2) and (3) with Theorem \ref{thm1}. The key step is to show that the diffeomorphisms so far constructed can be composed to produce a diffeomorphism $F:\mathbb{R}^3\to \mathbb{R}^3$ which can be realized by a continuous path of diffeomorphisms $F_\mu:\mathbb{R}^3\to \mathbb{R}^3$.
\end{itemize}
To show (4), Marques relies on a theorem of Cerf \cite{cerf1968diffeomorphismes} which implies that $\text{Diff}_+(D^3)$ is path connected. It then remains to show that a diffeomorphism $F:\mathbb{R}^3\to \mathbb{R}^3$ which is identity outside a compact set can be realized by a continuous path $\mu\in [0,1]$ of diffeomorphisms $F_{\mu}:\mathbb{R}^3\to \mathbb{R}^3$, each identity outside a compact set, such that $F_{\mu=0}=\text{id}$ and $F_{\mu=1}=F$. Since each $F_{\mu}$ gives an orientation preserving diffeomorphism of $D^3$, the path connectedness of $\text{Diff}_+(D^3)$ guarantees the existence of the desired path $F_{\mu}$. \\ \\ \indent
So what of Marques' argument for $\mathbb{R}^3\backslash B^3$? \\ \\
At first sight, one can expect (1), (2), (3) to be suitably generalizable. But with regards to (4), one would seek to show that composing the diffeomorphisms constructed produces a diffeomorphism $F:\mathbb{R}^3\backslash B^3 \to \mathbb{R}^3\backslash B^3$ that is identity outside a compact set and identity within a neighborhood of $\partial B^3$, which permits preserving the mean curvature condition. To find a continuous path of diffeomorphisms playing the role of $F_{\mu}$, we thus consider $\text{Diff}_{\partial}(A^3)$. But at this point we run into the issue that $\text{Diff}_{\partial}(A^3)$ has two connected components - a fact that can be derived from Hatcher's Theorem \cite{hatcher1983proof}.\footnote{We thank Sander Kupers for showing us that Corollary \ref{fibration} can be shown without Theorem \ref{hatcher}. }
\begin{thm}[Hatcher's Theorem]\label{hatcher}
There is a weak homotopic equivalence $\emph{Diff}(S^3) \simeq O(4)$.
\end{thm}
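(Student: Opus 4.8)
The stated result is the Smale conjecture, due to Hatcher \cite{hatcher1983proof}; I would prove it by routing the problem through a moduli space of metrics and then invoking Ricci flow, in the spirit of the methods underlying Theorems~\ref{thm1} and \ref{thm2}. First, reduce the statement to the assertion that the space $\mathcal{R}$ of constant-curvature metrics of total volume $1$ on $S^3$ is weakly contractible: the group $\Diff(S^3)$ acts by pullback on the space $\mathcal{M}_1$ of volume-$1$ metrics, transitively on $\mathcal{R}$ with the stabilizer of a fixed round metric $g_0$ equal to $\operatorname{Isom}(S^3,g_0)=O(4)$, so by standard slice-theorem arguments $\phi\mapsto\phi^*g_0$ is a locally trivial fiber bundle $O(4)\hookrightarrow\Diff(S^3)\to\mathcal{R}$ whose fiber inclusion is the defining one; the long exact homotopy sequence then shows $O(4)\hookrightarrow\Diff(S^3)$ is a weak equivalence as soon as $\mathcal{R}$ is weakly contractible. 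Since $\mathcal{M}_1$ is itself contractible (the space of all metrics is convex, and volume-normalization retracts it onto $\mathcal{M}_1$), it suffices to construct a deformation retraction of $\mathcal{M}_1$ onto $\mathcal{R}$.

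The natural mechanism for such a retraction is the volume-normalized Ricci flow $\partial_t g=-2\Ric_g+\frac{2}{3}\bar r_g\,g$: a metric with positive Ricci curvature flows smoothly to a round metric (Hamilton), and by Perelman's \cite{perelman2002entropy} surgery analysis the Ricci flow on $S^3$ becomes extinct in finite time with every surviving component becoming round. Morally, then, the retraction is ``flow $g$ toward its round limit, suitably reparametrized''; the remaining, and by far the hardest, step is to organize this flow into an honest deformation retraction of $\mathcal{M}_1$.

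That organization is the main obstacle, and it is severe: a generic metric on $S^3$ develops neckpinch singularities, surgery involves arbitrary parameter choices and in fact disconnects the manifold---so the surgered flow does not even stay inside $\mathcal{M}_1$---and extinction times are not uniform over a family $S^k\to\mathcal{M}_1$. Resolving exactly these points is the content of Bamler--Kleiner's work: one replaces Ricci flow with surgery by Kleiner--Lott's canonical singular Ricci flow, uses its uniqueness and continuous dependence on the initial metric, and then runs a delicate partial-homotopy (obstruction-theoretic) argument to splice the flow-induced contractions across all parameters into a genuine deformation retraction onto $\mathcal{R}$. The alternative is Hatcher's original route, which instead reduces---via a restriction fibration---to the contractibility of $\Diff_\partial(D^3)$ and then to the statement that the space of smoothly embedded $2$-spheres in $\R^3$ deformation retracts onto the round spheres, the obstruction there being to control the combinatorics of mutually intersecting spheres in parametrized families. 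In every approach the essential difficulty is the same: passing from a single manifold to continuous families of them.
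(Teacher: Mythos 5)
The paper does not actually prove this statement: Theorem~\ref{hatcher} is quoted as Hatcher's theorem with the citation \cite{hatcher1983proof}, and its only role is to feed the fibration argument of Corollary~\ref{fibration}, so there is no in-paper proof to compare yours against step by step. Your outline is a correct account of the two known routes. The reduction is sound: by Killing--Hopf every volume-one constant-curvature metric on $S^3$ is isometric to the round $g_0$, so $\Diff(S^3)$ acts transitively on $\mathcal{R}$ with stabilizer $O(4)$, Ebin's slice theorem makes $\phi\mapsto\phi^*g_0$ a fibration, and the long exact sequence plus contractibility of the space of volume-one metrics reduces everything to weak contractibility of $\mathcal{R}$, i.e.\ to producing a parametrized retraction of metrics onto the round ones. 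But, as you yourself acknowledge, that last step is the entire theorem: Hamilton's convergence and Perelman's finite-time extinction \cite{perelman2002entropy} give no control over families, and organizing singular Ricci flow (Kleiner--Lott's canonical flow, its uniqueness and continuous dependence, and the partial-homotopy argument) into a genuine deformation retraction is the content of Bamler--Kleiner's later work, which you cite rather than prove; Hatcher's original embedded-spheres argument is likewise a deep external input. So what you have written is an accurate expository explanation of why the theorem is true and of where the difficulty lives, not a self-contained proof --- which, in the context of this paper, is exactly the status the statement has: the appropriate move is the one the paper makes, namely quoting \cite{hatcher1983proof} (or, if you prefer the flow-based route, adding a reference to Bamler--Kleiner, which is not currently in the bibliography). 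One further caution if you were to flesh this out: the phrase ``every surviving component becoming round'' oversimplifies the near-extinction behavior of surgical Ricci flow, and the transition from single-metric statements to continuous $S^k$-families is precisely where naive versions of the retraction break down, so none of the analytic shortcuts suggested by the single-manifold picture can be taken at face value.
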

\begin{cor}\label{fibration}
$\emph{Diff}_{\partial}(A^3)$ has two connected components.
\end{cor}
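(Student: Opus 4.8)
The plan is to compute the group $\pi_0\big(\Diff_\partial(A^3)\big)$ outright and show that it is $\mathbb{Z}/2$, by exhibiting $\Diff_\partial(A^3)$ as the fiber of a fibration whose total space and base are understood. Realize $A^3$ as $\{\tfrac12\le |x|\le 1\}\subset\R^3$, so that $A^3=D^3\setminus\Int(D^3_{1/2})$ with $D^3=D^3_1$. Restriction of a boundary-fixing diffeomorphism of $D^3$ to the inner ball $D^3_{1/2}$ defines a map
\[
R\colon \Diff_\partial(D^3)\longrightarrow \operatorname{Emb}^+\!\big(D^3_{1/2},\Int(D^3)\big),\qquad \phi\longmapsto \phi|_{D^3_{1/2}},
\]
where $\operatorname{Emb}^+$ is the path component of the standard inclusion $\iota$ (a diffeomorphism of $D^3$ fixing $\partial D^3$ is orientation preserving, so $R$ lands there). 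By the parametrized isotopy extension theorem (Cerf, Palais), $R$ is a locally trivial fibration onto $\operatorname{Emb}^+\big(D^3_{1/2},\Int(D^3)\big)$, and its fiber over $\iota$ consists of those $\phi\in\Diff_\partial(D^3)$ that are in addition the identity on all of $D^3_{1/2}$; such a $\phi$ is the same datum as its restriction to $A^3$, which is an arbitrary element of $\Diff_\partial(A^3)$. Hence the fiber is exactly $\Diff_\partial(A^3)$.

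Next I would identify the two ends of the fibration. The base is homotopy equivalent to $SO(3)$: the ``$1$-jet at the center'' map $e\mapsto\big(e(0),D_0e\big)$ is a homotopy equivalence from $\operatorname{Emb}^+\big(D^3_{1/2},\Int(D^3)\big)$ onto the space of positively oriented frames of $\Int(D^3)\cong\R^3$, namely $\R^3\times GL_3^+(\R)\simeq SO(3)$, because the space of embeddings with a prescribed $1$-jet at the center is contractible --- this last point being witnessed, after identifying $\Int(D^3)$ with $\R^3$, by the radial rescaling $e_s(x)=s^{-1}e(sx)$, $s\in[0,1]$, which joins $e$ to the linear map $D_0e$. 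The total space $\Diff_\partial(D^3)$ is contractible, which is precisely the Smale conjecture in dimension three and is equivalent to Theorem \ref{hatcher}. Feeding both facts into the long exact homotopy sequence of $\Diff_\partial(A^3)\to\Diff_\partial(D^3)\to\operatorname{Emb}^+\big(D^3_{1/2},\Int(D^3)\big)$ and using $\pi_\ast\big(\Diff_\partial(D^3)\big)=0$ gives isomorphisms $\pi_i\big(\Diff_\partial(A^3)\big)\cong\pi_{i+1}\big(SO(3)\big)$, and in particular
\[
\pi_0\big(\Diff_\partial(A^3)\big)\cong\pi_1\big(SO(3)\big)\cong\mathbb{Z}/2,
\]
so $\Diff_\partial(A^3)$ has exactly two path components. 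The nontrivial one can be written down explicitly: in coordinates $(\omega,r)\in S^2\times[\tfrac12,1]$ the ``meridian twist'' $\Phi(\omega,r)=\big(\rho_{4\pi(r-\frac12)}\,\omega,\,r\big)$, with $\rho_\theta\in SO(3)$ the rotation by angle $\theta$ about a fixed axis, is the identity on both boundary spheres and represents the image under the connecting homomorphism of the generator of $\pi_1(SO(3))$.

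The only substantive ingredient is the contractibility of $\Diff_\partial(D^3)$, i.e.\ Theorem \ref{hatcher}; the rest is bookkeeping with isotopy extension together with the elementary homotopy equivalence $\operatorname{Emb}^+\big(D^3_{1/2},\Int(D^3)\big)\simeq SO(3)$. The one point that needs care is that it is the space of \emph{embeddings} of the inner ball, not $\Diff(D^3)$ itself, that sits opposite $\Diff_\partial(A^3)$ in the fibration, so that no three-dimensional input is needed beyond Hatcher's theorem. Finally, for the application in Section \ref{section 2} only the inequality $\pi_0\big(\Diff_\partial(A^3)\big)\ge 2$ is needed, and one can check that the class $[\Phi]$ above is nontrivial without invoking the full strength of Theorem \ref{hatcher}; this is the content of the remark attributed to S.~Kupers in the footnote.
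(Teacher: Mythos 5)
Your argument is correct and is essentially the proof the paper gives: the same fibration $\Diff_\partial(D^3)\to \operatorname{Emb}_+(D^3_{r'},D^3_r)$ with fiber $\Diff_\partial(A^3)$, weak contractibility of $\Diff_\partial(D^3)$ deduced from Theorem \ref{hatcher}, and the long exact sequence identifying $\pi_0(\Diff_\partial(A^3))$ with $\pi_1$ of the embedding space, which is $\pi_1(SO(3))=\mathbb{Z}/2\mathbb{Z}$ via the derivative (1-jet) map. The only differences are cosmetic: you verify the homotopy equivalence of the embedding space with $SO(3)$ by radial rescaling where the paper cites a reference, and you add an explicit representative of the nontrivial component.
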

\begin{proof}
By Theorem \ref{hatcher} and the facts\footnote{See for instance \cite{hatcher1983proof} or more simply \cite[II]{diffeo}.} that there is a weak homotopic equivalence $\text{Diff}(S^3) \simeq O(4) \times \text{Diff}_{\partial}(D^3)$, and that $\text{Diff}_{\partial}(D^3) \to \text{Diff} (D^3) \to \text{Diff}(S^{3})$ is a fibration, it follows that $\text{Diff}_{\partial}(D^3)$ is weakly contractible. Now consider the action on the standard embedding $D^3_{r'} \hookrightarrow D^3_r$ by $\text{Diff}_{\partial}(D^3_r)$. Letting $\text{Emb}_+(D^3_{r'} \:, \: D^3_r)$ denote the space of orientation preserving embeddings of $D^3_{r'}$ into $D^3_{r>r'}$, this gives a fibration $\text{Diff}_{\partial}(D^3_r) \to \text{Emb}_+(D^3_{r'} \:, \: D^3_r)$ with fiber of the standard embedding given by $\text{Diff}_{\partial}(A^3)$. Putting these observations together, the long exact sequence of homotopy groups implies that $\pi_0 (\text{Diff}_{\partial}(A^3) ) = \pi_1 (\text{Emb}_+(D^3_{r'}\: , \: D^3_r) )$. Since the derivative at the origin gives a projection\footnote{See for instance the proof of \cite[Lemma 9.2.3]{diffeo}.} $\text{Emb}_+(D^3_{r'},\:D^3_r)\to SO(3)$, and since $\pi_1( SO(3) ) = \mathbb{Z}\backslash 2\mathbb{Z}$, it follows that $\text{Diff}_{\partial}(A^3)$ has two connected components. 
\end{proof} 
In sum, the best case seems to be that Marques' argument yields no more than two connected components.

\section{Proof}
There are three main steps to the proof of Theorem \ref{thm3}. From now on $M$ denotes a manifold diffeomorphic to $\mathbb{R}^3\backslash B^3$.
\begin{enumerate}
    \item \textbf{Deformation}. First we show Lemma \ref{deformation}, which gives a continuous path $\in \mathcal{M}_{R=0,H=0}^{k,p,\rho}$ from an arbitrary asymptotically flat metric to a metric which is conformally flat outside a compact set.
    \item \textbf{Compactification}. Then we show Lemma \ref{compactification}, which constructs a diffeomorphism from $M$, now endowed with a metric (conformally flat outside a compact set) with $R=0$ and $H=0$, to $(S^3\backslash B^3)\backslash \{p\}$ such that $S^3\backslash B^3$ admits a metric of positive Yamabe type.
    \item \textbf{Interpolation}. In Lemma \ref{interpolation} we explicitly construct a continuous family of diffeomorphisms from $M$ to $S^3\backslash B^3$ that permits combining steps (1) and (2) whilst invoking Theorem \ref{thm2} and \cite{escobar1992conformal}.
\end{enumerate} 
Steps (1), (2), (3) yield a proof of Theorem \ref{thm3}, whilst Corollaries \ref{cor}, \ref{cor2} follow relatively straightforwardly from Theorem \ref{thm3}.

\subsection{Deformation}

Before stating and proving Lemma \ref{deformation}, we recall the following result of Maxwell. Let $F_{\alpha,\beta}$ denote the following operator $((\Delta-\alpha)|_{M},(\partial_\nu+\beta)|_{\partial M}))$, and let $n$ be the dimension of $M$ ($3$ in our case). 
\begin{prop}[Maxwell \cite{maxwell2005solutions}]\label{maxwell}
Let $(M,g)$ be asymptotically flat of class $W^{k,p}_{\rho}$, $k\geq 2$, $k>n/p$, and suppose $\alpha\in W^{k-2,p}_{\rho-2}$ and $\beta\in W^{k-1-\frac{1}{p},p}$. Then if $2-n<\rho<0 $ the operator $F_{\alpha,\beta}:W^{k,p}_{\rho}\to W^{k-2,p}_{\rho-2}(M)\times W^{k-1-\frac{1}{p},p}(\partial M)$ is Fredholm with index $0$. Moreover if $\alpha,\beta\geq 0$ then $F$ is an isomorphism.
\end{prop}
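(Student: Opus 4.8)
The plan is to run the standard weighted-Fredholm argument for a second-order elliptic operator carrying a coercive (Robin-type) boundary condition on an asymptotically flat manifold. The heart of the matter is a global a priori estimate
\[
\|u\|_{W^{k,p}_{\rho}(M)}\;\le\;C\Bigl(\|(\Delta-\alpha)u\|_{W^{k-2,p}_{\rho-2}(M)}+\|(\partial_\nu+\beta)u\|_{W^{k-1-\frac1p,p}(\partial M)}+\|u\|_{L^{p}(K)}\Bigr)
\]
for a fixed compact set $K\subset M$. I would prove this with a partition of unity subordinate to three regions: a collar of $\partial M$, a compact interior piece, and the asymptotically flat end. On the interior one uses the Agmon--Douglis--Nirenberg interior $L^{p}$ estimate; on the collar one uses the ADN boundary estimate, which applies because $(\partial_\nu+\beta)$ satisfies the Lopatinskii--Shapiro complementing condition relative to $\Delta$ (its principal part is the transversal derivative---the textbook case---and $\beta$ is lower order); on the end, since $g-g_{E}\in W^{k,p}_{\rho}$ and $\alpha\in W^{k-2,p}_{\rho-2}$ is decaying, the operator is there a small perturbation of $\Delta_{g_{E}}$, which by the classical weighted theory for the flat Laplacian is an \emph{isomorphism} $W^{k,p}_{\rho}(\mathbb R^{n})\to W^{k-2,p}_{\rho-2}(\mathbb R^{n})$ precisely because the interval $2-n<\rho<0$ contains no indicial roots $\{0,1,2,\dots\}\cup\{2-n,1-n,\dots\}$ and is the range on which the flat Laplacian is invertible. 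Patching, the partition-of-unity commutator terms are compactly supported and absorbed into the $\|u\|_{L^{p}(K)}$ term.

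Granting this estimate, the compact Rellich embedding $W^{k,p}_{\rho}(M)\hookrightarrow\hookrightarrow L^{p}(K)$ together with a standard argument gives that $F_{\alpha,\beta}$ has finite-dimensional kernel and closed range, i.e.\ it is semi-Fredholm. To pin the index, observe that $F_{\alpha,\beta}-F_{0,0}$ is the operator $u\mapsto(-\alpha u,\ \beta\,u|_{\partial M})$, which is \emph{compact}: multiplication by the decaying function $\alpha$ is a norm-limit of multiplications by compactly supported functions, each compact by Rellich, while $u\mapsto\beta\,u|_{\partial M}$ factors through the compact trace inclusion $W^{k,p}_{\rho}(M)\to W^{k-\frac1p,p}(\partial M)\hookrightarrow\hookrightarrow W^{k-1-\frac1p,p}(\partial M)$. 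Hence $\operatorname{ind}F_{\alpha,\beta}=\operatorname{ind}F_{0,0}$, and it remains to treat $F_{0,0}=(\Delta_{g},\partial_\nu)$. Here I would identify $\operatorname{coker}F_{0,0}$ with the kernel of the formal adjoint on the dual weighted spaces; the Neumann problem is formally self-adjoint, and the weight dual to $\rho-2$ is $2-n-\rho$, which again lies in $(2-n,0)$ whenever $\rho$ does. Thus both $\ker F_{0,0}$ and $\operatorname{coker}F_{0,0}$ consist of $g$-harmonic functions with vanishing Neumann data that decay at infinity (elements of $W^{k,p}_{\sigma}$ with $\sigma<0$ tend to $0$ by the weighted Sobolev embedding, \cite[Lemma 2.1]{maxwell2005solutions}); by the maximum principle such a function cannot have a positive interior maximum without being constant, nor a positive boundary maximum, since the Hopf boundary-point lemma would force $\partial_\nu u>0$ against $\partial_\nu u=0$; hence it vanishes. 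So $F_{0,0}$ is an isomorphism and $\operatorname{ind}F_{\alpha,\beta}=0$. (Alternatively, one may homotope $g\rightsquigarrow g_{E}$ through asymptotically flat metrics and $(\alpha,\beta)\rightsquigarrow(0,0)$ and compute the index of the flat exterior Neumann problem directly.)

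Finally, for the ``moreover'' assertion, assume $\alpha,\beta\ge0$. Since an injective semi-Fredholm operator of index $0$ has trivial cokernel and closed range, hence is an isomorphism, it suffices to show $F_{\alpha,\beta}$ is injective. If $F_{\alpha,\beta}u=0$ then $\Delta_{g}u=\alpha u$ and $\partial_\nu u=-\beta u$; a standard bootstrap using $\alpha\in W^{k-2,p}_{\rho-2}$ and the absence of indicial roots in $(2-n,0)$ improves $u$ to $W^{k,p}_{\sigma}$ for every $\sigma>2-n$, so that the boundary term at infinity in Green's identity vanishes. Integrating by parts and using the boundary condition then yields an identity of the form $\int_{M}|\nabla u|^{2}+\int_{M}\alpha u^{2}+\int_{\partial M}\beta u^{2}=0$, whence $\nabla u\equiv0$; since $u\to0$ at infinity, $u\equiv0$.

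I expect the main obstacle to be the first step: importing the flat-space weighted isomorphism theorem for $\Delta_{g_{E}}$ on $\mathbb R^{n}$ and splicing it together with the interior and boundary ADN estimates through the partition of unity to produce the global weighted a priori estimate. Once that estimate is in hand, the remaining ingredients---the compact-perturbation reduction of the index, the weighted-duality identification of the cokernel, and the maximum-principle/energy argument for injectivity---are comparatively routine.
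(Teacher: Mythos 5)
This proposition is quoted from Maxwell \cite{maxwell2005solutions}; the paper gives no proof of it, so there is nothing internal to compare against. Your outline reproduces, correctly in its essentials, the standard argument of the cited source: weighted interior/boundary elliptic estimates plus the flat-model isomorphism on $2-n<\rho<0$ to get semi-Fredholmness, compact perturbation by the decaying zeroth-order terms to reduce the index to the exterior Neumann problem (where the duality step tacitly needs adjoint elliptic regularity, or your alternative homotopy to the flat model), and the decay bootstrap plus Green's identity and the maximum principle for injectivity and hence the isomorphism when $\alpha,\beta\geq 0$.
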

We now prove the following.
\begin{lem}\label{deformation}
Let $g_{-1}\in \mathcal{M}_{R=0,H=0}^{k,p,\rho}$. There is a path $\mu\in [-1,0]\to g_\mu\in \mathcal{M}_{R=0,H=0}^{k,p,\rho}$ such that $g_0$ is smooth, conformally flat outside a compact set, minimal boundary, and moreover this path is continuous in the $W^{k,p}_\rho$ topology. 
\end{lem}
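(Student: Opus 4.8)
The plan is to deform $g_{-1}$ in two sub-stages: first make it smooth and conformally flat near infinity (and keep $R=0$, $H=0$), then make it smooth everywhere and adjust the boundary to be minimal, all the while staying inside $\mathcal{M}_{R=0,H=0}^{k,p,\rho}$ and moving continuously in $W^{k,p}_\rho$.

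\textbf{Step 1: smoothing and harmonic flatness at infinity.} Since $g_{-1}\in W^{k,p}_\rho$ with $k\ge 2$, $k>n/p$, standard density arguments let me approximate $g_{-1}$ by smooth metrics in the $W^{k,p}_\rho$ norm; cutting off the correction away from $\partial M$ I may assume the metric is unchanged near $\partial B^3$ (so $H$ is untouched there) and smooth on a large annulus and beyond. Of course this destroys $R=0$. To restore it I use the conformal method: for a metric $\tilde g$ close to $g_{-1}$, solve $\mathcal{L}_{\tilde g}u = 0$ with $u\to 1$ at infinity and Neumann-type boundary condition $(\partial_\nu + \tfrac{n-2}{2(n-1)}H_{\tilde g})u=0$ at $\partial M$; this is precisely the operator $F_{\alpha,\beta}$ of Proposition \ref{maxwell} with $\alpha = \tfrac{n-2}{4(n-1)}R_{\tilde g}\ge$ small, $\beta$ built from $H_{\tilde g}$. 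Writing $u = 1 + v$ with $v\in W^{k,p}_\rho$, invertibility (Maxwell) plus the fact that the linearization is an isomorphism when the zeroth-order terms are nonnegative gives a unique solution $v$ depending continuously on $\tilde g$; for $\tilde g$ close enough to $g_{-1}$ one has $u>0$, and $u^{4/(n-2)}\tilde g$ has $R=0$. Because $\tilde g = g_{-1}$ near $\partial B^3$ and $H_{g_{-1}}=0$ there, the boundary condition forces $\partial_\nu u = 0$ at $\partial M$, hence $H$ of the conformal metric is still $0$. Interpolating $\tilde g$ between $g_{-1}$ and its smoothed, near-infinity-conformally-flat version and composing with the conformal factor gives a continuous path in $\mathcal{M}_{R=0,H=0}^{k,p,\rho}$; one should also arrange, by a further conformal change using the expansion of the Green's function, that the metric is \emph{exactly} $u^4 g_E$ outside a compact set (harmonic flatness), as in Schoen--Yau / Marques.

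\textbf{Step 2: smoothing in the interior and minimizing the boundary.} At this point the metric $g'$ is smooth and conformally flat near infinity, $R=0$, $H=0$, but may be only $W^{k,p}_\loc$ on a compact region. Repeat the conformal trick with a globally smoothed approximation: the region to be corrected is now compact, the analysis is the same, and the only subtlety is again the boundary term. To also make $\partial M$ \emph{minimal} ($H=0$ is already minimal, so in fact this is automatic) — here I should read the statement as asking for the boundary to be genuinely minimal, which the conformal construction already delivers since $H=0$ is preserved. If instead one wanted to further normalize the boundary geometry (e.g.\ totally geodesic or round), one would post-compose with a diffeomorphism supported near $\partial M$ and a small conformal correction, solving $F_{\alpha,\beta}$ once more; continuity in $W^{k,p}_\rho$ is inherited from continuity of the solution operator.

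\textbf{Main obstacle.} The delicate point is keeping the mean-convexity/minimality constraint $H=0$ exactly satisfied while solving the scalar-curvature equation, since the conformal Laplacian's natural boundary condition couples $\partial_\nu u$ to $H$; the resolution is to choose all approximating metrics to agree with $g_{-1}$ in a fixed collar of $\partial B^3$ (where $H\equiv 0$), so that the Robin condition degenerates to Neumann and $u$ has vanishing normal derivative there, automatically preserving $H=0$. The second, more technical, obstacle is verifying that the solution $v = u-1$ of the Maxwell boundary value problem depends continuously — indeed smoothly — on the background metric in the $W^{k,p}_\rho$ topology and stays positive; this follows from the implicit function theorem applied to the map $(\tilde g, v)\mapsto F_{\alpha(\tilde g),\beta(\tilde g)}(1+v)$, using that $F_{\alpha,\beta}$ is an isomorphism for $\alpha,\beta\ge 0$ (Proposition \ref{maxwell}) together with the Sobolev multiplication and embedding theorems valid in the range $k\ge 2$, $p>n/k$, $2-n<\rho<0$.
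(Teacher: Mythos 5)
Your overall skeleton (cut off / smooth the metric, then restore the constraints by a conformal factor obtained from Maxwell's Fredholm--isomorphism result, with continuity of the path coming from continuity of the solution operator along a linear interpolation) is the same as the paper's. But there is a genuine gap in how you handle the boundary. Your stated resolution of the ``main obstacle'' is to freeze all approximating metrics to equal $g_{-1}$ in a fixed collar of $\partial B^3$, so that the Robin condition degenerates to a Neumann condition and $H=0$ is preserved. This is incompatible with the conclusion of the lemma: $g_{-1}$ is only of class $W^{k,p}$, and the conformal factor $u=1+v$ produced by Proposition \ref{maxwell} is itself only $W^{k,p}$ (it solves an elliptic problem whose coefficients $R_{\tilde g},H_{\tilde g}$ are merely $W^{k-2,p}$), so with your choice the endpoint metric $u^4\tilde g$ is not smooth in the collar, whereas the lemma demands a smooth $g_0$. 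In Step 2 you do say ``globally smoothed approximation,'' but then you justify minimality by ``$H=0$ is preserved,'' which is exactly the mechanism you have just given up: after smoothing near the boundary, $H_{\tilde g}$ is no longer zero, and you never supply the correct reason why the conformal metric is still exactly minimal.

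The missing idea, which is how the paper argues, is that no freezing is needed: one smooths globally (and, before smoothing, simply glues in the Euclidean metric outside a large ball with a cutoff $\eta_R$, which is a small $W^{k,p}_\rho$ perturbation for $\rho<0$ and makes ``conformally flat outside a compact set'' automatic at the end, avoiding your vague appeal to a further Schoen--Yau/Marques harmonic-flatness step), and then solves
\begin{align}
\Delta v-\tfrac18 R_{\tilde g}\,v=\tfrac18 R_{\tilde g},\qquad \partial_\nu v+\tfrac14 H_{\tilde g}\,v=-\tfrac14 H_{\tilde g},
\end{align}
so that $u=1+v$ satisfies $\partial_\nu u=-\tfrac14 H_{\tilde g}u$. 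By the transformation law $\hat H=u^{-2}H_{\tilde g}+4u^{-3}\partial_\nu u$ this forces $\hat H\equiv 0$ for the conformal metric $u^4\tilde g$ \emph{whatever} the (small, nonzero) mean curvature of the smoothed background is, just as the interior equation forces $\hat R\equiv 0$ even though $R_{\tilde g}$ need not be nonnegative; invertibility of $F_{\frac18 R_{\tilde g},\frac14 H_{\tilde g}}$ for $\tilde g$ close to $g_{-1}$ follows by openness from the isomorphism at $g_{-1}$ (where $\alpha=\beta=0$), as you correctly note via the implicit function theorem. With that correction your argument closes and essentially reproduces the paper's proof; as written, however, the boundary mechanism you rely on cannot yield a smooth $g_0$.
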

\begin{proof}
Let $\eta$ be a smooth cut-off function $0\leq \eta\leq 1$ such that $\eta(t)=1$ for $t\leq 1$ and $\eta(t)=0$ for $t\geq 2$.
Pick $R_0$ such that metric ball $B_{R_0}(0)$ contains $\partial M$. Given the metric $g_{-1}\in \mathcal{M}_{R=0,H=0}^{k,p,\rho}$, set $\eta_R(t)=\eta(t/R)$ for $R>R_0$, and define the new metric 
\begin{align}
g_{R}=(1-\eta_R)g_E+\eta_Rg_{-1}
\end{align}
We can now approximate $g_R$ with a smooth $g'_R$ such that $||g_R-g'_R||_{W^{k,p}_\rho}$ is arbitrarily small. \\ \\
For $\mu\in [-1,0]$, we define $g_{R,\mu}=(1+\mu)g'_{R}-\mu g_{-1}$. \\ \\ 
We now observe that by Proposition \ref{maxwell},  $F_{\frac{1}{8}R_{g_{-1}} ,\frac{1}{4}H_{g_{-1}}}$ is an isomorphism, and that consequently, for $||\hat{g}-g_{-1}||_{W^{k,p}_\rho}$ sufficiently small, $F_{\frac{1}{8}R_{\hat{g}},\frac{1}{4}H_{\hat{g}}}$ is also an isomorphism. So choosing $||g_R'-g_R||$ sufficiently small and $R$ large enough, $F_{\frac{1}{8}R_{g_{R,\mu}}, \frac{1}{4}H_{g_{R,\mu}}}$ is an isomorphism.  \\ \\
We may now consider the unique solution $v_{R,\mu}$ to $F_{\frac{1}{8}R_{g_{R,\mu}}, \frac{1}{4}H_{g_{R,\mu}}}=(\frac{1}{8}R_{g_{R,\mu}},-\frac{1}{4}H_{g_{R,\mu}})$. \\ \\
Observe that $\Delta v_{R,\mu}=\frac18R_{g_{R,\mu}}(1+v_{R_\mu})=\frac18R_{g_{R,\mu}}u$ where $u:=1+v_{R,\mu}$ and that $\partial_\nu v_{R,\mu}=-\frac{1}{4}H_{g_{R,\mu}}(1+v_{R,\mu})=-\frac{1}{4}H_{g_{R,\mu}}u$. The path in Lemma \ref{deformation} now follows by setting $g_\mu=u^4g_{R,\mu}$.
\end{proof}

\subsection{Compactification}
\begin{lem}\label{compactification}
Let $g$ be a smooth, asymptotically flat, conformally flat metric outside a compact set with $g=u^4g_E$, on $M\simeq \mathbb{R}^3\backslash B^3$ with $R= 0$ and $H=0$. Denote by $\overline{M}$ a manifold diffeomorphic $S^3\backslash B^3$ and $p$ a point in the interior of $\overline{M}$. Then there exists a diffeomorphism $\phi:M\to \overline{M}\backslash \{p\}$ and a smooth function $v:M \to \R$ such that 
\begin{itemize}
    \item outside a large ball in $M$ we have $\phi=\exp_{\overline{g},p}\circ \emph{Inv}$ where $\emph{Inv}(x):=\frac x{|x|^2}$ is the inversion map,
	\item on $\overline{M}\backslash \{p\}$ the metric $\overline{g}:=\phi_\ast(v^4g)$ extends smoothly to $\{p\}$,
	\item the mean curvature of $\partial \overline{M}$ satisfies $\overline{H}=0$,
	\item $(\overline{M},\overline{g})$ has positive Yamabe type.
\end{itemize}
\end{lem}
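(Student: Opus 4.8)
The plan is to compactify the asymptotically flat end of $M$ by an inversion — the boundary analogue of step (2) of Marques' argument and of the classical Schoen–Yau compactification — while keeping careful track of the conformal factor. The hypotheses enter in two places: the \emph{exact} conformal flatness of $g$ near infinity is what makes the rescaled metric genuinely flat (hence smoothly extendable) at the new point $p$, and $R_g=0$ is what forces the conformal factor to be a conformal-Laplacian Green's function, which is what gives positive Yamabe type.

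First I would fix coordinates: realize $M$ as $\{|x|\ge 1\}\subset\R^3$ with $\partial M=\{|x|=1\}$, so that by hypothesis $g=u^4g_E$ on $\{|x|\ge R_0\}$ for some $R_0\ge 1$. The conformal change formula $R_{u^4g_E}=-8u^{-5}\Delta_{g_E}u$ and $R_g=0$ show $u$ is a positive harmonic function on this end, and asymptotic flatness forces $u\to 1$ at infinity, so $u-1=O(|x|^{-1})$. Let $\Inv(x)=x/|x|^2$, put $\overline M:=\{|y|\le 1\}\cong D^3\cong S^3\backslash B^3$, $p:=0$, and let $\phi:=\Inv\colon M\to\overline M\backslash\{p\}$; this is a diffeomorphism since $\Inv$ is an involution of $\R^3\backslash\{0\}$ carrying $\{|x|\ge 1\}$ onto $\{0<|y|\le 1\}$ (post-compose with a reflection if an orientation-preserving map is wanted). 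Finally choose $v\in C^\infty(M)$ with $v>0$, $v\equiv 1$ near $\partial M$, and $v=\tfrac{1}{|x|\,u(x)}$ on $\{|x|\ge 2R_0\}$, interpolating positively in between; the point of this choice is that then $v^4g=(vu)^4g_E=|x|^{-4}g_E$ far out. Set $\overline g:=\phi_\ast(v^4g)$ on $\overline M\backslash\{p\}$.

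Two short computations yield the first three bullets. Since $\Inv^\ast g_E=|y|^{-4}g_E$ and $|x|=|y|^{-1}$ in these coordinates, on a punctured ball about $p$ one gets $\overline g=|y|^4\cdot|y|^{-4}g_E=g_E$; hence $\overline g$ extends across $p$ as the flat (so smooth and nondegenerate) metric, the $y$-coordinates are $\overline g$-normal coordinates at $p$, and $\phi=\exp_{\overline g,p}\circ\Inv$ outside a large ball as stated. Away from $p$, $\overline g$ is the push-forward of the smooth metric $v^4g$ by a diffeomorphism, so $\overline g$ is a smooth metric on all of $\overline M$. Near $\partial\overline M$ we have $v\equiv 1$, so $\overline g=\phi_\ast g$ there and $\phi$ restricts to an isometry of collars taking the outward direction to the outward direction, whence $\overline H=H_g=0$.

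The last bullet is the step needing the most care. Set $G:=\phi_\ast(v^{-1})$, a positive smooth function on $\overline M\backslash\{p\}$ equal to $1$ near $\partial\overline M$, so $\partial_\nu G=0=\overline H$ there and $G$ obeys the Robin condition of Escobar's conformal boundary operator. By conformal covariance of the conformal Laplacian, $L_{v^4g}(v^{-1})=v^{-5}L_g(1)=v^{-5}R_g=0$, and $\phi$ is an isometry, so $L_{\overline g}G=0$ on $\overline M\backslash\{p\}$; near $p$, $G=\tfrac{1}{|y|}\,u(\Inv(y))$ is the Kelvin transform of $u$, hence harmonic, and $u-1=O(|x|^{-1})$ gives $G=\tfrac1{|y|}+\psi$ with $\psi$ smooth and harmonic near $p$, so (as $\overline g=g_E$ there) $L_{\overline g}G=-8\Delta_{g_E}G=32\pi\,\delta_p$ distributionally. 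Thus $G$ is (a positive multiple of) the Green's function of $L_{\overline g}$ with pole $p$ and the above boundary data; pairing $L_{\overline g}G=32\pi\delta_p$ against a first eigenfunction $\varphi_1>0$ of the associated boundary eigenvalue problem and using self-adjointness (the boundary term vanishes on the Robin condition) yields $\lambda_1\int_{\overline M}G\varphi_1\,dV_{\overline g}=32\pi\,\varphi_1(p)>0$, so $\lambda_1>0$, i.e.\ $(\overline M,\overline g)$ has positive Yamabe type in the sense of \cite{escobar1992conformal}. I expect the real work to be purely bookkeeping: confirming that $v^4g$ is honestly flat near $p$ (this is exactly where exact conformal flatness of $g$ at infinity is used), that $G$ has the stated singularity and satisfies the correct Escobar boundary condition, and that the Green's-function/eigenvalue identity carries no stray boundary terms.
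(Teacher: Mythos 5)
Your proposal is correct and follows essentially the same route as the paper: compactify the end by an inversion with a conformal factor $v$ equal to $1$ near the boundary and to $1/(u|x|)$ near infinity, identify $G=\phi_\ast(v^{-1})$ as a conformal Green's function satisfying the Robin/Neumann boundary condition, and deduce positive Yamabe type from an integration-by-parts identity with vanishing boundary terms. The only differences are cosmetic: you invert onto a flat disc (so $\overline{g}$ is exactly Euclidean near $p$, which neatly handles the smooth extension and the $\exp_{\overline{g},p}\circ\operatorname{Inv}$ bullet) rather than using the inverse stereographic projection, and you pair $\mathcal{L}_{\overline{g}}G$ against the first eigenfunction of Escobar's boundary eigenvalue problem instead of against the Yamabe solution $\zeta$ as the paper does.
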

\begin{rk}
This is the boundary version of \cite[Theorem 9.3]{marques2012deforming}, which shows that an asymptotically flat manifold with positive scalar curvature gives rise to a Yamabe positive metric on $S^3$.
\end{rk}
\begin{proof}
Let $\phi:\R^3\to S^3\backslash \{p\}$ be the inverse stereographic projection which we restrict to $M$.
We define $v:M\to\R$ by
\begin{align}
\begin{cases}
v(x):=\frac 1{u(x)|x|} \quad\text{for $x$ near $\infty$},\\
v(x):=1\quad\text{for $x$ in a neighborhood of $\Sigma$}.
\end{cases}
\end{align}
where $\Sigma$ denotes the boundary of $\mathbb{R}^3\backslash B^3$. Clearly we can find $v(x)$ which smoothly interpolates between these two regions. \\ \indent 
By construction $\overline{g}$ can be extended smoothly from $\overline{M}\backslash \{p\}$ to $\overline{M}$ and we have $\overline{H}=H$ due to $v(x)=1$ in a neighborhood of $\Sigma$ .\\ \indent 
Moreover, it is easy to see that $\phi$ can be constructed such that $\phi=\exp\circ\text{Inv}$ outside a large ball.\\ \indent 
Thus it remains to show that $(\overline{M}, \overline{g})$ has positive Yamabe type.\\ \indent 
For this, we begin with showing that $G:=\phi_\ast(\frac1v)$ is a conformal Green's function on $(\overline{M}, \overline{g})$, that is 
\begin{align}
\label{eq:1'}
\mathcal L_{\overline{g}}G=-4\pi\delta_{p}\quad\quad\text{and}\quad\quad \partial_\nu G=-\frac {\bar H}4G=0.
\end{align}
Here $\mathcal L$ is again the conformal Laplacian $\mathcal L G=\Delta G - \frac18R_{\bar g}G$.
Observe that $\partial_\nu u =0$ and $\bar H=0$, so the second condition of being a conformal Green's function is satisfied.
Since the formula for scalar curvature under conformal transformation yields
\begin{align}
0=R=-G^5\mathcal L_{\overline{g}}G,
\end{align}
we obtain $\mathcal L_{\overline{g}}G=0$ outside $\{p\}$.
Near $\{p\}$ we have by construction $G(y)=\frac1{|y|}+\mathcal O(1)$ where $|y|$ denotes the distance from $p$ to $y$ with respect to $\overline{g}$.\\ \indent 
To show \eqref{eq:1'}, we first let $f$ be a smooth test function on $\overline{M}$. In that case, there exists, for every $\epsilon>0$, a $\delta\in (0,\epsilon]$ such that $|f(x)-f(p)|\le\epsilon$ for $x\in B_\delta(\infty)$.
Thus, we have
\begin{align}
\int_{B_\delta(p)}f\Delta G =&(f(p)+\mathcal O(\epsilon))\int_{S_\delta(p)}\partial_\nu G
\\=&(f(p)+\mathcal O(\epsilon))(-4\pi+\mathcal O(\delta^2))
\\=&-4\pi f(p)+\mathcal O(\epsilon)
\end{align}
which shows \eqref{eq:1'}.\\ \indent 
Escobar showed in \cite{escobar1992conformal} that for some $c\in\R$ there exists a solution to the Yamabe equation with boundary, i.e. there is a $\zeta:\overline{M}\to \R^+$ such that 
\begin{align}
\begin{cases}
\frac{R}{8}\zeta-\Delta\zeta=c\zeta^5\quad\text{for $x\in \overline{M}$},\\
\partial_\nu\zeta=-\frac H4\phi\quad \text{for $x\in\overline{\Sigma}$}.
\end{cases}
\end{align}
The Yamabe type is then characterized by the sign of $c$.
Thus, to show that $\overline{M}$ has positive Yamabe type it suffices to show that $c>0$.
We compute
\begin{align}
c\int_{\o M} G\zeta^5=&\int_{\o M}(-G\Delta \zeta+8GR\zeta)
\\=&-\int_{\o\Sigma} G\partial_\nu\zeta+\int_{\o M}\left(\left\langle \nabla\zeta,\nabla G\right\rangle+\frac18GR\zeta\right)\
\\=& \int_{\o \Sigma}\frac14\bar HG\zeta-\int_{\o M}\zeta\mathcal L_gG +\int_\Sigma \zeta\partial_\nu G\\
=&4\pi\zeta(p)>0.
\end{align}
This finishes the proof.
\end{proof}

\subsection{Interpolation}
It remains to show that the path $\mu\in[0,1]\to \overline{g}_\mu$ gives rise to a continuous path from $g_0$ to $g_1$ in $\mathcal{M}_{R=0,H=0}^{k,p,\rho}$. To do this, one needs to `invert' Lemma \ref{compactification} in a suitable way. This will be done using the usual blow up of $\overline{g}_\mu$ by its conformal Green's function $G_\mu$. This blow up gives a new metric on $\overline{M}\backslash \{p\}$, which in turn can be pulled back onto $M$ by a continuous family of diffeomorphisms $\phi_\mu:M\to \overline{M} \backslash \{p\}$. The challenge is now to construct $\{\phi_\mu\}$.

\begin{lem}\label{interpolation}
Let $\o g_0$ and $\o g_1$  be two metrics coming from Lemma \ref{compactification}, and let $\o g_\mu$ be a $C^\infty$-continuous path between these metrics. Moreover, let $G_\mu:\o M\to \R$ be a continuous family of functions which satisfy in a small neighborhood of $p$
\begin{align}
G_\mu(y)=\frac1{|y|_{\o g_\mu}}+A_{\mu}+\mathcal O_k(|y|_{\overline{g}_\mu})
\end{align}
where $|y|_{\o g_\mu}$ denotes the distance of $y$ to $p$ with respect to $\o g_\mu$. Then there exists a continuous path of diffeomorphisms $\phi_\mu$ such that $\mu\in [0,1]\to g_\mu\equiv \phi_\mu^\ast(G_\mu^4\o g_\mu)$ is a continuous path of asymptotically flat metrics $g_\mu\in C^\infty(M)\cap W^{k,p}_\rho(M)$ on $M$.
\end{lem}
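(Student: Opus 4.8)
The plan is to build the family $\phi_\mu$ by patching together three pieces: a fixed diffeomorphism near the boundary $\Sigma$, a $\mu$-dependent ``middle'' diffeomorphism on a compact collar, and a $\mu$-dependent ``end'' diffeomorphism near $\infty\leftrightarrow p$ built from the exponential map of $\o g_\mu$. Near $p$, the natural candidate is $\phi_\mu = \exp_{\o g_\mu,p}\circ\,\mathrm{Inv}$, exactly as prescribed in Lemma \ref{compactification}; since $\o g_\mu$ is a $C^\infty$-continuous path and $p$ is fixed, normal coordinates $\exp_{\o g_\mu,p}$ depend continuously (indeed smoothly) on $\mu$ on a uniform geodesic ball $B_{r_0}(p)$, the radius $r_0$ being uniform by compactness of $[0,1]$. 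Pulling back the blow-up metric $G_\mu^4\o g_\mu$ under this map and using the expansion $G_\mu(y)=|y|_{\o g_\mu}^{-1}+A_\mu+\mathcal O_k(|y|)$ together with $\mathrm{Inv}^*(\,|y|^{-4}g_{\mathrm{model}})=g_E$-type computations, one checks that $\phi_\mu^*(G_\mu^4\o g_\mu)-g_E$ lies in $W^{k,p}_\rho$ with $\rho<0$ (indeed the $A_\mu$ term contributes the ``mass'' and gives the leading $1/|x|$ behavior), and that the resulting family is continuous in $W^{k,p}_\rho$ because $A_\mu$, $\o g_\mu$, and the $\mathcal O_k$-error all vary continuously. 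This handles asymptotic flatness and the end.

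Next I would fix the behavior near $\Sigma$: since $G_\mu$ satisfies $\partial_\nu G_\mu = 0$ there is no conformal distortion forced at the boundary, so we may and do take $\phi_\mu \equiv \phi_0$ on a fixed neighborhood of $\Sigma$ for all $\mu$, which automatically keeps $g_\mu = \phi_\mu^*(G_\mu^4\o g_\mu)$ equal to a fixed metric near $\Sigma$; in particular the minimal boundary condition and smoothness there are preserved for free. The remaining task is to interpolate between the fixed map $\phi_0$ on the inner collar and the $\mu$-dependent map $\exp_{\o g_\mu,p}\circ\mathrm{Inv}$ on the outer region. Here I would work on a large but fixed annular region $A_R = \{R \le |x| \le 2R\}$ in $M$: on $\partial\{|x|=2R\}$ the two prescriptions disagree, but both land in the fixed manifold $\o M\setminus\{p\}$ and differ by a diffeomorphism isotopic to the identity (both are orientation-preserving embeddings of a ball, and the relevant space of such embeddings is connected up to the $SO(3)$-ambiguity, which we can absorb). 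One then uses a standard isotopy-extension / partition-of-unity construction in the fixed coordinate chart to glue the two families across $A_R$, producing a single continuous family $\phi_\mu:M\to\o M\setminus\{p\}$ of diffeomorphisms agreeing with $\phi_0$ near $\Sigma$ and with $\exp_{\o g_\mu,p}\circ\mathrm{Inv}$ near $\infty$. Because the gluing happens on a fixed compact set and all data are $C^\infty$-continuous in $\mu$, the pulled-back metrics $g_\mu$ form a $C^\infty$-continuous path there, hence a $W^{k,p}_\rho$-continuous path on all of $M$ when combined with the end estimate.

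The main obstacle I expect is the gluing step on the annulus $A_R$, specifically ensuring that the two boundary families of embeddings are honestly joined through \emph{diffeomorphisms} (not merely smooth maps) in a way continuous in $\mu$. This is precisely the place where, in Marques' original argument, one would invoke path-connectedness of a diffeomorphism group — and where, as Section \ref{section 2} shows, the analogous statement for $\mathrm{Diff}_\partial(A^3)$ fails. The point of the present construction is that we never need to connect an \emph{arbitrary} pair of boundary diffeomorphisms: the two families we must interpolate are both explicit (one constant, one $\exp_{\o g_\mu,p}\circ\mathrm{Inv}$ with $\o g_\mu$ a given continuous path), they agree at $\mu=0$ and $\mu=1$ after the earlier normalizations, and they differ only by the finite-dimensional $SO(3)$-ambiguity in the choice of normal frame at $p$, which can be trivialized by a continuous choice of frame along the path $\o g_\mu$. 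Thus the gluing reduces to a concrete isotopy problem on a fixed annulus with explicitly controlled boundary data, solvable by the standard isotopy-extension theorem, and this is what lets the argument close without any appeal to the homotopy type of $\mathrm{Diff}_\partial(A^3)$.
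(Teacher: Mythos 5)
Your overall architecture is the same as the paper's: keep a fixed diffeomorphism on a compact piece containing $\Sigma$, use an explicit chart-at-$p$ composed with $\mathrm{Inv}$ near infinity, interpolate on a fixed annulus, and verify asymptotic flatness from the expansion $G_\mu=|y|^{-1}+A_\mu+\mathcal O_k(|y|)$ together with normal-coordinate estimates. (Two small remarks: the paper never uses the genuine exponential map $\exp_{\o g_\mu,p}$ — it uses the fixed chart $\psi$ composed with an explicit correction $f_\mu$, and then must redo the asymptotic-flatness estimate because $\psi_\mu\neq\exp_{\o g_\mu,p}$; your choice makes that estimate more standard but pushes all the difficulty into the gluing. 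Also, taking $\phi_\mu\equiv\phi_0$ near $\Sigma$ does not make $g_\mu$ a \emph{fixed} metric there, since both $\o g_\mu$ and $G_\mu$ vary near the boundary; it only gives a continuously varying family, which is all that is needed.)

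The genuine gap is the gluing step, which is exactly where the paper does its real work. For a one-parameter family over $[0,1]$ whose endpoints must return to the fixed compactification diffeomorphism $\phi$ (otherwise the lifted path does not terminate at $g_0$ and $g_1$), connectedness of $\mathrm{Emb}_+(D^3,\cdot)$ is the wrong invariant: the obstruction to extending the family across the annulus rel endpoints is the class of the \emph{loop} of mismatches in $\pi_1$ of the embedding space, which by Corollary \ref{fibration} is precisely $\pi_0(\mathrm{Diff}_\partial(A^3))\cong\mathbb{Z}/2\mathbb{Z}$; a soft isotopy-extension gluing can perfectly well end at $\mu=1$ with a map differing from $\phi$ by an element of the nontrivial component, reinstating the very problem the lemma is meant to avoid. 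Moreover your stated reason for triviality — that the two prescriptions ``differ only by the $SO(3)$ frame ambiguity'' — is not accurate: for intermediate $\mu$ the metric $\o g_\mu$ is neither flat near $p$ nor equal to $\o g_0$ there, so the discrepancy between $\exp_{\o g_\mu,p}$ (in a $\o g_\mu(p)$-orthonormal frame) and the fixed chart $\psi$ contains a stretch coming from $\o g_\mu(p)\neq g_E$ plus higher-order terms. The correct reason the specific loop is contractible is that its linear part can be chosen canonically in the convex (hence contractible) set of positive-definite symmetric matrices, e.g. via $\o g_\mu(p)^{-1/2}$ or the diagonalization $(\lambda_1,\lambda_2,\lambda_3)_\mu$, and the nonlinear remainder can be scaled away near $p$. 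Making this precise is essentially what the paper's explicit construction does: the diffeomorphisms $f_\mu(x)=\bigl(x_1/\hat\lambda_1(r),x_2/\hat\lambda_2(r),x_3/\hat\lambda_3(r)\bigr)$, with $\hat\lambda_j$ equal to $\lambda_j$ near $0$, equal to $1$ near the gluing sphere, $r/\hat\lambda_j(r)$ monotone (so each $f_\mu$ is a diffeomorphism), and the cutoff scale $\xi$ uniform in $\mu$ by compactness of $[0,1]$; this yields $\phi_0=\phi_1=\phi$ automatically and requires no appeal to the homotopy type of embedding or diffeomorphism spaces. Without some such explicit (or explicitly justified) interpolation, your proof of the lemma is incomplete at its central step.
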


\begin{proof}
Let $\phi:M\to\o M\backslash \{p\}$ be the diffeomorphism of Lemma \ref{compactification}.
We choose $\epsilon$ so small  such that
 \begin{itemize}
\item $\R^3\backslash B_{\frac1\epsilon}(0)$ is in the conformally flat regime of $g_0$ and $g_1$ so that $\o g_0=\o g_1=\psi_\ast g_E$ in a small neighborhood around $\{p\}$,
\item $\phi=\psi\circ \Inv$ outside $B_{\frac1\epsilon}(0)$ where $\psi=\exp_{\o g_1,p}=\exp_{\o g_0,p}=\exp_{g_E,p}$ in a small neighborhood around $\{p\}$,
\item $(\R^3\backslash B_{\frac1\epsilon}(0))\cap\Sigma=\emptyset$.
\end{itemize}
Our continuous family of diffeomorphisms $\phi_\mu$ can now be defined as follows 
\begin{align}
\phi_\mu=\begin{cases}
\phi\quad\text{inside $B_{\frac1\epsilon}(0)$}\\
\psi_\mu\circ\Inv\quad\text{outside $B_{\frac1\epsilon}(0)$}
\end{cases}
\end{align}
where $\psi_\mu:B_{\epsilon}(0)\hookrightarrow S^3$ is a continuous path of maps, each diffeomorphic onto their image, such that $\psi_0=\psi_1=\psi$ on $B_\epsilon(0)$. The task is now to show that $g_\mu=\phi_\mu^\ast(G^4_\mu\o g_\mu)$ is an asymptotically flat metric on $M$. To show this we must construct $\psi_\mu$ suitably. To do so we set $\psi_\mu \equiv \psi\circ f_\mu$ where $f_\mu:B_\epsilon(0)\to B_\epsilon(0)$ is a diffeomorphism onto its image with $f_0=f_1=\operatorname{Id}$. It now remains to specify $f_\mu$.\\ \indent
On $T_pS^3=\R^3$ we have both the standard metric $\psi_\ast g_E$ and the metric $\o g_\mu(p)$.
We choose an orthonormal basis $\{e_1,e_2,e_3\}_\mu$ for $g_E$ and rotate it such a way that $\o g_\mu(p)$ is diagonal, i.e. $\o g_\mu(p)=\operatorname{diag} (\lambda_1^2,\lambda_2^2,\lambda_3^2)_\mu$ for some numbers $\lambda_i>0$. Note that the choice of orthonormal basis is continuous with respect to $\mu$. Although $\{ e_1,e_2,e_3 \}_\mu$ and $(\lambda_1,\lambda_2,\lambda_3)_\mu$ depend on $\mu$, we hereby suppress this to declutter the notation. \\ \indent
We now define $f_\mu:B_\epsilon(0)\to B_\epsilon(0)$
\begin{align}
f_{\mu}(x_1,x_2,x_3)=\left(\frac {x_1}{\hat\lambda_1(r)},\frac {x_2}{\hat\lambda_2(r)}, \frac {x_3}{\hat\lambda_3(r)}\right)
\end{align}
and where $\hat\lambda_j$, $j=1,2,3$, are smooth, $\mu$-dependent and satisfy
\begin{align}
\hat\lambda_j(r)=\lambda_j
\end{align}
for $r=\sqrt{x_1^2+x_2^2+x_3^2}\le \xi$ where $0<\xi<\epsilon$ is chosen below. 
Moreover, we impose that
\begin{align}
\lambda_j(r)=1
\end{align}
for $\frac\epsilon2\le x\le \epsilon$ and that $\frac r{\hat\lambda_j(r)}$ is strictly monotone increasing. We can do this by choosing $\frac\xi{2\min\{\lambda_1,\lambda_2,\lambda_3\}}<\frac\epsilon2$. Since $[0,1]$ is compact, we can choose $\xi$ uniform in $\mu$. Moreover, $f_\mu:B_\epsilon(0)\to B_\epsilon(0)$ is a diffeomorphism that depends continuously on $\mu$, and $\psi_\mu$ is a diffeomorphism onto its image.\\ \indent 
It now remains to show that the resulting metric $g_\mu$ is asymptotically flat. Note that this follows not immediately since $\psi_\mu(x)\ne\exp_{\o g_\mu,p}(x)$.\\ \indent
% However, as it turns out error terms are  of sufficiently lower order so the error terms can be controlled.
For $y$ in a neighborhood of $p$, define a metric $\hat{g}$ by $\hat g(y)=\psi_\ast(\o g_\mu(p))$, where we have interpreted $\psi$ as map $\psi:B_\rho(0)\subset T_pS^3\to S^3$ for some sufficiently small $\rho$.\\ \indent
We now compare $\o g_\mu$ and $\hat g$. For this purpose we take our previous orthogonal basis with respect to $\o g_\mu(p)$, and orthonormal with respect to $g_E$ of $T_pS^3$ and scale it to be orthonormal with respect to $\o g_\mu(p)$; that is, we set $\o e_i=\frac1{\lambda_{i}}e_i$, $i=1,2,3$, and exponentiate it onto $S^3$ to obtain normal coordinates on $S^3$. 
By construction we have in these coordinate $\hat g_{ij}=\delta_{ij}$ and the normal coordinate formula yields  
\begin{align}\label{eq:normalcoordinates}
   (\o g_\mu)_{ij}(y)=\delta_{ij}+\mathcal O_2(|y|_{\o g_\mu}^2) 
\end{align}
in a small neighborhood around $\{p\}$.
Here the $\mathcal O_2$ denotes the higher order estimates
\begin{align}
\partial_k(\o g_\mu)_{ij}=\mathcal O(|y|),\quad \partial_k\partial_l(\o g_\mu)_{ij}=\mathcal O(1).
\end{align}
Furthermore, $\psi_\mu=\exp_{\hat g_\mu,p}$ in a small neighborhood of the origin. 
Next, we pull back the normal coordinates under $Df_\mu$ to obtain an orthonormal basis in $T_{y}\R^3$ for $|y|_{\o g_\mu}$ very small. After scaling and inverting, this gives then an orthonormal basis for $T_x\R^3$, $|x|$ very large, which we denote by $\{\tilde e_i\}$.
We compute
\begin{align}
g_{\mu}(\tilde e_i,\tilde e_j)=&((\psi\circ f_\mu\circ \Inv)^\ast(G^4_\mu\bar g_\mu ))(x)(\tilde e_i,\tilde e_j)\\
=&\frac1{|x|_{g_\mu}^4}\left(G_{\mu}\left(\psi\circ f_\mu\left(\frac x{|x|_{g_\mu}^2}\right)\right)\right)^4\o g_\mu\left(\frac x{|x|_{g_\mu}^2}\right)(\o e_i, \o e_j)
\end{align}
Our assumptions on $G$ together with equation \eqref{eq:normalcoordinates} yield in particular
\begin{align}
G_\mu(y)=\frac1{|y|_{\hat g_\mu}}+\mathcal O(1)
\end{align}
and 
\begin{align}
|\nabla G_\mu(y)|_{\o g_\mu}=\mathcal O(|y|_{\hat g_\mu}^{-2}),\quad|\nabla^2 G_\mu(y)|_{\o g_\mu}=\mathcal O(|y|_{\hat g_\mu}^{-3}).
\end{align}
Combining this with the equation
$
|\psi\circ f_\mu(\frac x{|x|_{g_\mu}^2})|_{\hat g_\mu}=\frac1{|x|_{g_\mu}}
$
implies
\begin{align}
\frac1{|x|^4_{g_\mu}}\left(G_\mu\left(\psi\circ f_\mu\left(\frac x{|x|_{g_\mu}^2}\right)\right)\right)^4=1+\mathcal O_2(|x|^{-1}_{g_\mu})
\end{align}
and thus
\begin{align}
g_\mu(\tilde e_i,\tilde e_j)=(1+\mathcal O_2(|x|^{-1}_{g_\mu}))(\delta_{ij}+\mathcal O_2(|x|^{-2}_{g_\mu}))=\delta_{ij}+\mathcal O_2(|x|^{-1}_{g_\mu}),
\end{align}
i.e. $g_\mu\in C^{\infty}(M)\cap C^2_{-1}(M)$ is asymptotically flat, which in turn implies $g_\mu\in W^{2,p}_\rho(M)$. Higher $k$ proceeds identically, and thus $g_\mu\in W^{k,p}_\rho(M)$.
\end{proof}

\subsection{Proof of Main Theorems}

\begin{proof}[Proof of Theorem \ref{thm3}]
Take any two metrics $g_{-1}$ and $g_2$ in $\mathcal{M}_{R=0,H=0}^{k,p,\rho}$ on $M$.
Our goal is to find a path $g_\mu$, $\mu\in[-1,2]$ in $\mathcal{M}_{R=0,H=0}^{k,p,\rho}$ connecting $g_{-1}$ and $g_2$.\\ \indent 
Lemma \ref{deformation} gives us a continuous path connecting $g_{-1}$ to $g_0$ and $g_2$ to $g_1$ respectively, with $g_0,g_1$ smooth, $R=0$, $H=0$, conformally flat outside a compact set containing $\partial M$. \\ \indent 
Lemma \ref{compactification} gives us a diffeomorphism from $(M,g_0)$ and $(M,g_1)$ onto $(S^3\backslash B^3)\backslash\{p\}$ that produces the manifold $(S^3\backslash B^3,\o g_0)$ and $(S^3\backslash B^3,\o g_1)$ where $\o g_0$ and $\o g_1$ are of positive Yamabe type with minimal boundary. \\ \indent 
Escobar's work on the Yamabe problem with boundary \cite{escobar1992conformal} guarantees that we can find a $C^\infty$-continuous path $\o g_\mu$, $\mu\in[0,1/3]$ within the conformal class of $\o g_0$ to a metric with positive scalar curvature metric and minimal boundary. 
Equally, we have a path from $\o g_1$ to $\o g_{2/3}$ with $\o g_{2/3}$ having positive scalar curvature and minimal boundary.\\ \indent
Theorem \ref{thm2} of Carlotto-Li gives us a path $\o g_\mu$, $\mu\in[1/3,2/3]$ between these metrics.
Next, we find as in \cite{escobar1992conformal} conformal Green's functions $G_\mu$, $\mu\in[0,1]$, i.e., functions satisfying $\partial_\nu G_\mu-\frac14H_{\o g_\mu}G_\mu=0$ and $\Delta_{\o g_\mu} G_\mu+\frac18 R_{\o g_\mu}G_\mu=4\pi\delta_p$ where $\delta_p$ is the Dirac $\delta$ function.
This places us in the setting of Lemma \ref{interpolation}, which allows us to lift the path $g_\mu$, $\mu\in[0,1]$ to a path $g_\mu$ of asymptotically flat metrics of zero scalar curvature and minimal boundary.
Hence, we have constructed a continuous path $g_\mu$ between $g_{-1}$ and $g_2$.\end{proof} 

\begin{figure}
    \centering
    \includegraphics[width=9.2cm]{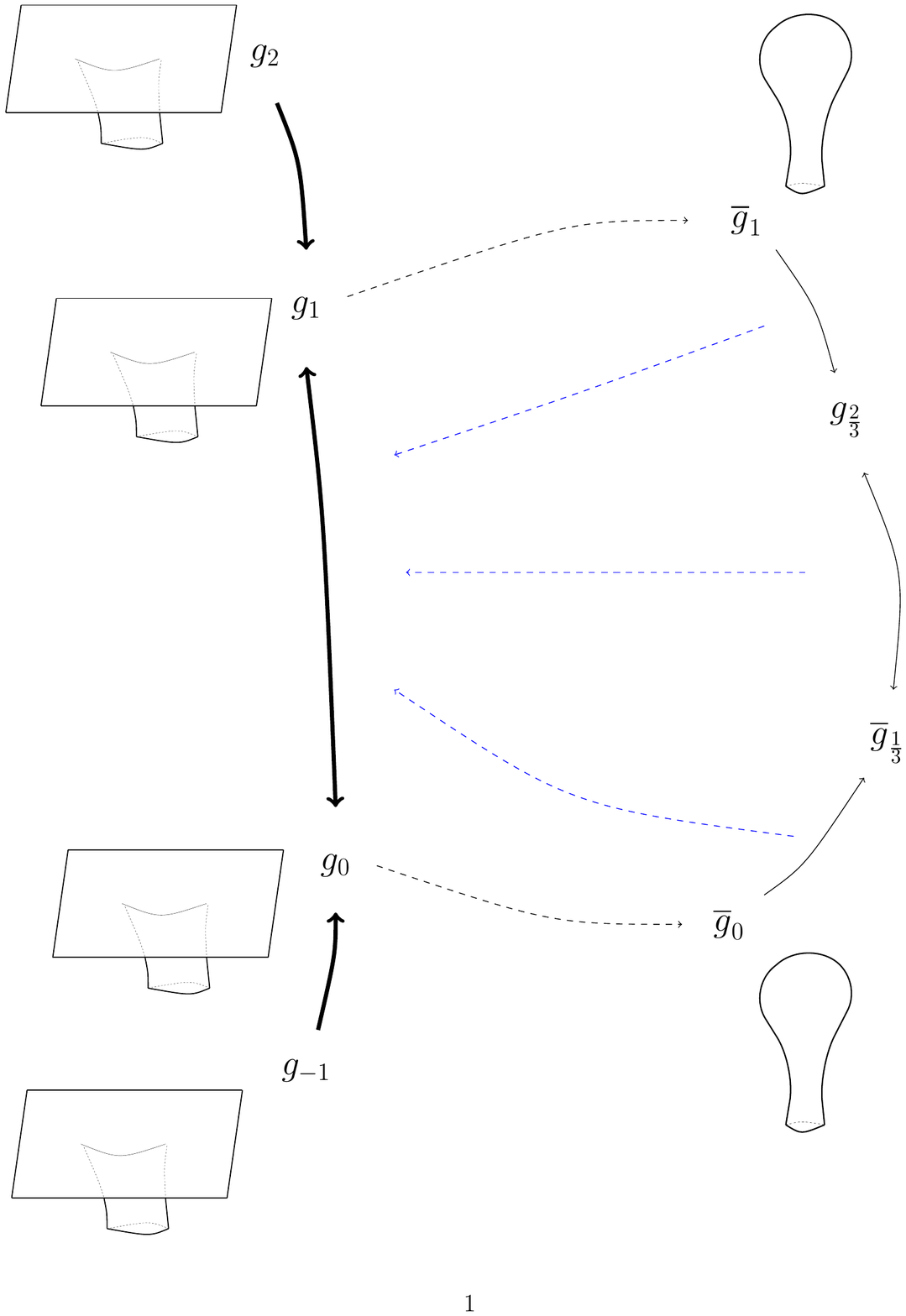}
    \caption{Lemma \ref{deformation} gives $g_2\to £g_1$ and $g_{-1}\to g_0$, and Lemma \ref{compactification} gives $g_1\to \overline{g}_1$ and $g_0\to \overline{g}_0$. The path from $\overline{g}_1$ to $\overline{g}_0$ follows from \cite{escobar1992conformal} and Theorem \ref{thm2}, and finally Lemma \ref{interpolation} gives the blue dashed line which permits lifting the path of metrics on $S^3\backslash B^3$ to a path of metrics on $M=\mathbb{R}^3\backslash B^3$.}
    \label{fig:my_label}
\end{figure}

\begin{proof}[Proof of Corollary \ref{cor}]
Consider the following PDE
\begin{align}
\begin{cases}
\Delta\phi-\frac18R\phi=0\quad\text{on $\R^3\setminus B^3$,}\\
\partial_\nu\phi+\frac14H\phi=0\quad\text{on $\partial B^3$}
\end{cases}
\end{align}
where $\phi\to1$ at $\infty$.
As in the proof of Lemma \ref{deformation}, we can use Proposition \ref{maxwell} to solve this equation.
Thus we can conformally deform both the scalar curvature and mean curvature to zero. This places us in the setting of Theorem \ref{thm3}, which finishes the proof.
\end{proof}

For Corollary \ref{cor2}, we start with the relevant space of interest, $\mathcal M_{BH}$, defined to be the set of all doubles $(g,\sigma)$ satisfying the vacuum, maximal constraint equations
\begin{align}
R=|\sigma|_g^2,\quad\quad\quad \tr_g\sigma=0, \quad \quad
\nabla_i\sigma^i_j=0
\end{align}
such that $g\in W^{k,p}_\rho$ is asymptotically flat, $\sigma \in W^{k-1,p}_{\rho-1}(M)$, and the surface $\Sigma\equiv \partial M$ satisfies $H+\sigma(\nu,\nu)\leq 0$ and $H\geq 0$ where $\nu$ is the normal to $\partial B^3$ pointing away from the asymptotically flat end..
\begin{proof}[Proof of Corollary \ref{cor2}]
Our task is to show that $\mathcal M_{BH}$ is path connected in $W^{k,p}_\rho(M) \oplus W^{k-1,p}_{\rho-1}(M) $. To do so we consider the larger set $\tilde{\mathcal M}_{BH}$ given by replacing $R=|\sigma|_g^2$ with $R\ge|\sigma|_g^2$ and $H\geq -\sigma(\nu,\nu)\geq 0$. Consider now the deformation 
\begin{align}
(g,\sigma)\to (g,(1-\mu)\sigma),\quad \mu\in[0,1]
\end{align}
Since $H\ge -\sigma(\nu,\nu)\ge0$  and $R\ge|\sigma|_g^2$ this deformation take place in $\tilde{\mathcal M}_{BH}$, from which it follows by Corollary \ref{cor} that $\tilde{\mathcal M}_{BH}$ is path connected.
We consider the conformal deformations $\hat g =u^4g$ and $\hat \sigma=u^{-2}\sigma$, which in turn implies $|\hat{\sigma}|^2_{\hat{g}}=u^{-12}|\sigma|^2_g$.
We would like to preserve the mean curvature condition $H\ge -\sigma(\nu,\nu)\ge0$ and the scalar curvature condition $R=|\sigma|_g^2$ under this conformal deformations.
%\textcolor{red}{Observe now that we can write}
%\begin{align}
%H=\xi(\sigma(\nu,\nu))
%\end{align}
%for some $\xi\ge1$. Consider the conformal deformations $\hat g =u^4g$ and $\hat \sigma=u^{-2}\sigma$. At the boundary we want to maintain $\hat H=\xi(\hat\sigma(\hat\nu,\hat\nu))=\xi(\sigma(\nu,\nu))u^2$. 
Recalling the formula for the change in mean curvature
\begin{align}
\hat H=u^{-2}H+4u^{-3}\partial_\nu u
\end{align}
and scalar curvature
\begin{align}
\hat R= u^{-4}R-8u^{-5}\Delta u
\end{align}
we are led to the Lichnerowicz equation
\begin{align}
\begin{cases}
\Delta u-\frac18Ru+\frac18|\sigma|^2u^{-7}=0 \quad\text{on $\R^3\setminus B^3$},\\
\partial_\nu u+\frac14H u-\frac 14 \sigma(\nu,\nu)u^{-3}=0\quad\text{on $\partial B^3$}
\end{cases}
\end{align}
where $u\to 1$ at $\infty$.\\ \indent
To solve this equation we use the method of sub and super solutions from \cite[Proposition 3.5]{maxwell2005solutions}. We note that $u=1$ is a supersolution. Next, as in the proof of Lemma \ref{deformation}, we solve the equation
\begin{align}
\begin{cases}
-\Delta \psi+\frac18R\psi=-\frac18R,\\
\partial_\nu \psi+\frac14H\psi=-\frac14H
\end{cases}
\end{align}
where $\psi\to0$ at $\infty$ and $\psi\in W^{k,p}_\rho$. 
Denoting $\underline u=1+\psi$ we have $0<\underline u\le 1$ and
\begin{align}
-\Delta \underline u+\frac18R\underline u-\frac18|\sigma|^2\underline u^{-7}=-\frac18|\sigma|^2\underline u^{-7}\le0.
\end{align}
Moreover, 
\begin{align}
&\partial_\nu \psi+\frac14H (1+\psi)-\frac 14H(1+\psi)^{-3}=-\frac14H(1+\psi)^{-3}\le0.
\end{align}
and so $\underline u$ is a subsolution, and thus there exists a solution $u\in 1+W^{k,p}_\rho$ with $0< u\leq 1$. \\ \indent 
Uniqueness now follows in a standard way. Namely, if $u_1$ and $u_2$ are solutions, then by the Lichnerowicz equation, $u_1-u_2$ satisfies \begin{align}
   \left( \left(\Delta -\frac{1}{8}\left(R-|\sigma|^2 \frac{u_1^{-7}-u_2^{-7}}{u_1-u_2}\right)\right), \left( \partial_\nu +\frac{1}{4}H\left(1-\frac{u_1^{-3}-u_2^{-3}}{u_1-u_2}\right) \right) \right) (u_1-u_2)=0
\end{align}
By the isomorphism of Proposition \ref{maxwell}, it follows that $u_1=u_2$. 
\end{proof}

\bibliographystyle{alpha}
\bibliography{bib.bib}

\end{document}